\documentclass[11pt]{article}
\usepackage[margin=1in]{geometry}
\usepackage[utf8]{inputenc}
\usepackage{amsmath,amsfonts,amssymb,amsthm}
\usepackage{mathrsfs}
\usepackage[dvipsnames]{xcolor}
\usepackage{mathtools}
\usepackage{graphicx}
\usepackage{tikz-cd}
\usepackage[hidelinks]{hyperref} 
\usepackage[shortlabels]{enumitem}
\usepackage{bbm}
\usepackage{float}

\setlist{nolistsep}

\newcommand{\alg}[1]{\mathscr{#1}}
\newcommand{\cat}[1]{\mathcal{#1}}
\newcommand{\vir}[1]{\mathfrak{#1}}
\newcommand{\drm}{\mathrm{d}}

\DeclareMathOperator{\ns}{ns}
\DeclareMathOperator{\reduce}{red}
\DeclareMathOperator{\GL}{GL}
\DeclareMathOperator{\id}{id}
\DeclareMathOperator{\Det}{Det}
\DeclareMathOperator{\Ker}{Ker}
\DeclareMathOperator{\Hom}{Hom}

\let\det\relax 
\DeclareMathOperator{\det}{det}

\newtheorem{thm}{Theorem}[section]
\newtheorem{lem}[thm]{Lemma}
\newtheorem{prp}[thm]{Proposition}
\newtheorem{cor}{Corollary}[thm]

\theoremstyle{definition}
\newtheorem{dfn}{Definition}[section]

\theoremstyle{remark}

\title{$L^2$-torsion of fibrations}
\author{Chengzhang Sun}
\date{}

\begin{document}
    \pagenumbering{roman}
    \maketitle
    \begin{abstract}
        The paper studies the $L^2$-torsion of fibrations, focusing on cases that relax acyclicity and the determinant class condition. We prove the sum formula and the product formula for $L^2$-torsion in the extended abelian category. The desired formula for $L^2$-torsion of a simple fibration is obtained under the assumption that the fibers have zero Euler characteristic. 
    \end{abstract}
    \pagenumbering{arabic}
    \section{Introduction}
    In 1976, Atiyah \cite{atiyah1976elliptic} proposed to study $L^2$-invariants of noncompact manifold and he defined $L^2$-Betti numbers analytically. Since then, various $L^2$-invariants have been introduced, and among those, $L^2$-torsion was introduced around 1992 as a real-valued invariant of cocompact $G$-manifolds \cite{carey1992l2,mathai1992l2,lott1992heat}. It is a generalization the Reidemeister torsion and the analytic torsion of compact manifolds. There is a technical condition known as the determinant class condition. Besides, in the combinatorial setting, one must assume acyclicity; this condition can be avoided if one define the torsion as an element of a determinant line \cite{farber1997determinant}. 
    
    It was conjectured that any cocompact $G$-manifold is of determinant class \cite{lueck2002l2}, and it has been known for a large class of groups \cite{elek2005hyperlinearity}. If the Novikov-Shubin invariant is positive, then the determinant class condition holds. However, recent work of Austin \cite{austin2013rational} and Grabowski \cite{grabowski2016irrational} shows that the Novikov-Shubin invariant may be any nonnegative real number. It remains unclear if the determinant class condition is always satisfied. This condition was removed in \cite{braverman2005l2}, by working in an extended abelian category. The process is outlined in \S2 of this paper, which is also known as the combinatorial $L^2$-torsion. The definition of $L^2$-torsion in this paper is slightly more general than the original one. There is also an analytic $L^2$-torsion defined in an extended abelian category, and it is known that it can be identified with the combinatorial $L^2$-torsion \cite{braverman2005l2,zhang2005extended}, generalizing the Cheeger-M{\"u}ller theorem. 
    
    In this paper, we establish the sum formula and the product formula in \S3, generalizing the corresponding formulas in the classical case. In \S4, we use these results to study the $L^2$-torsion of fibrations. In \S5, some open problems are mentioned. 

    \paragraph{Acknowledgement.} The author wishes to thank Professor Xianzhe Dai for his guidance and reading through the draft of the paper. 

    \section{Rudiments of \texorpdfstring{$L^2$}{L2}-torsion}

    For more information and a more detailed discussion, see \cite{braverman2005l2}. 

    \subsection{Setup}
    Let $\alg{A}=(\alg{A},\tau)$ be a finite von Neumann algebra with a fixed choice of trace $\tau$. For example, let $G$ be a discrete group, and take $\alg{A}$ to be the group von Neumann algebra, and take $\tau$ to be the von Neumann trace. 

    Let $l^2(\alg{A})$ be the completion of $\alg{A}$ with respect to the inner product 
    \[(a,b)=\tau(b^*a).\] 
    A (left) \emph{Hilbert $\alg{A}$-module} is a Hilbert space $V$ with a continuous $\alg{A}$-action such that there is an equivariant embedding 
    \[V\hookrightarrow l^2(\alg{A})\otimes H\]
    for some Hilbert space $H$. We say that $V$ is \emph{finitely generated} if $H$ can be taken to be finite dimensional. 

    A \emph{Hilbertian $\alg{A}$-module} is a topological vector space $V$ with a continuous $\alg{A}$-action such that there exists an inner product on $V$ that makes $V$ a Hilbert $\alg{A}$-module with respect to the aforementioned $\alg{A}$-action. Any such inner product is called \emph{admissible}. Different choices of admissible inner products give rise to isomorphic Hilbert $\alg{A}$-modules. Indeed, if $\langle\ ,\ \rangle$ and $\langle\ ,\ \rangle'$ are admissible, then 
    \begin{equation}\label{eq:ip}
        \langle x,y\rangle'=\langle \alpha(x),y\rangle
    \end{equation}
    for some positive operator $\alpha:V\to V$. 

    The category of finitely generated Hilbertian $\alg{A}$-modules is denoted as $\cat{H}(\alg{A})$. The morphisms are equivariant continuous linear maps. It is a preabelian category. The \emph{extended abelian category} in the sense of Freyd is denoted as $\cat{E}(\alg{A})$. An object in $\cat{E}(\alg{A})$ is a morphism $\alpha:A'\to A$ in $\cat{H}(\alg{A})$, and a morphism from $\alpha:A'\to A$ to $\beta:B'\to B$ is an equivalence class of morphisms $f:A\to B$ such that $\exists g:A'\to B'$ with $f\alpha=\beta g$, where $f\sim f'$ if $f-f'=\beta F$ for some $F:A\to B'$. 

    The objects of $\cat{E}(\alg{A})$ are denoted as $\vir{X},\vir{Y},\vir{Z},\ldots$. Every element $\vir{X}$ is isomorphic to $A'\xrightarrow{\alpha}A$ for some injective $\alpha$ (by quotient out the kernel). 

    There is an embedding $F:\cat{H}(\alg{A})\to\cat{E}(\alg{A})$, $F(A)=(0\to A)$. It is fully faithful and allows us to identify objects in $\cat{H}(\alg{A})$ as projective objects in $\cat{E}(\alg{A})$. We say that $\vir{X}\in\cat{E}(\alg{A})$ is \emph{torsion} if $\vir{X}=(A'\xrightarrow{\alpha} A)$ where $\alpha$ has dense image. The full subcategory of torsion objects is denoted as $\cat{T}(\alg{A})$. For torsion $\vir{X}$, there is an associated invariant $\ns(\vir{X})\in [0,\infty]$ known as the \emph{Novikov-Shubin invariant}. 

    Any $\vir{X}\in \cat{E}(\alg{A})$ can be decomposed into its projective part and torsion part, in the sense that there exists a short exact sequence
    \[0\to T\vir{X}\to \vir{X}\to P\vir{X}\to 0.\]
    The short exact sequence splits but the splitting is not canonical. This defines two functors 
    \[T:\cat{E}(\alg{A})\to\cat{T}(\alg{A}), \quad P:\cat{E}(\alg{A})\to\cat{H}(\alg{A}).\]

    A \emph{projective} cochain complex in $\cat{E}(\alg{A})$ is a cochain complex $C=(C^\bullet,d)$ such that $C^\bullet$ is a projective object, i.e., $C^\bullet\in\cat{H}(\alg{A})$. We always assume that the complex has finite length. Computing the cohomology in $\cat{E}(\alg{A})$, we get \emph{$i$-th extended $L^2$-cohomology}
    \[H^i(C)=(C^{i-1}\xrightarrow{d}\Ker d|_{C^i}).\]
    Its projective part is the \emph{$i$-th reduced $L^2$-cohomology} $H_{\reduce}^i(C)$, and its torsion part gives rise to $\ns(H^i(C))$, which is the \emph{$i$-th Novikov-Shubin invariant} $\ns^i(C)$. We always compute cohomology in the extended abelian category unless the contrary is explicitly stated. 

    \subsection{Determinant line and the torsion}

    Let $A\in \cat{H}(\alg{A})$. Denote 
    \[\GL(A)=\{\alpha:A\to A\mid \alpha \text{ invertible}\}.\]
    For $\alpha\in\GL(A)$, take a piecewise smooth path $\alpha_t$ from $\alpha_0=\id$ to $\alpha_1=\alpha$. The \emph{Fuglede-Kadison determinant} of $\alpha$ is defined by
    \[\ln\Det_\tau \alpha=\int_0^1\Re\tau(\alpha_t^{-1}\alpha'_t)\,\drm t.\]
    which is independent of the path.

    The \emph{determinant line} of $A$ is the real vector space spanned by admissible inner products on $A$, quotient out the subspace generated by 
    \[\langle\ ,\ \rangle'=\Det_\tau(\alpha)^{-1/2}\langle\ ,\ \rangle,\]
    where the admissible inner products are related by \eqref{eq:ip}. This one-dimensional vector space is denoted as $\det A$, or $\det_\tau A$ if we need to specify the trace. It comes with a natural orientation, where positive vectors are represented by $\langle\ ,\ \rangle$. One can think of its elements as volume forms on $A$. In general, there is no canonical isomorphism $\det A\cong \mathbb{R}$. The basic properties of determinant lines are summarized below. For proofs, see \cite{farber1996homological}. 
    \begin{lem}\label{lem:det-iso}
        Let $A,B$ be objects of $\cat{H}(A)$. 
        \begin{enumerate}
            \item There is a canonical isomorphism $\det A\otimes \det B\cong \det(A\oplus B)$. 
            \item In general, if 
            \[0\to H'\xrightarrow{\alpha} H\xrightarrow{\beta} H''\to 0\]
            is a short exact sequence in $\cat{H}(\alg{A})$, then there is an isomorphism $\det H'\otimes\det H''\cong \det H$ determined by $\alpha,\beta$. 
            \item If $f:A\to B$ is an isomorphism in $\cat{H}(\alg{A})$, then 
            \[f_*:\det A\to \det B, \quad \langle\ ,\ \rangle\mapsto \langle f^{-1}(-),f^{-1}(-)\rangle,\]
            is an isomorphism. 
        \end{enumerate}
        The isomorphisms above are all orientation preserving. 
    \end{lem}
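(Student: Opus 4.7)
All three claims follow the same pattern: define the candidate isomorphism on representatives (admissible inner products), verify compatibility with the scaling relation $\langle\,,\,\rangle' = \Det_\tau(\alpha)^{-1/2}\langle\,,\,\rangle$, and descend to determinant lines. For (1), I would send $\langle\,,\,\rangle_A \otimes \langle\,,\,\rangle_B$ to the orthogonal direct sum $\langle\,,\,\rangle_A \oplus \langle\,,\,\rangle_B$ on $A \oplus B$ (which is admissible, being a direct sum of Hilbert module structures). Rescaling $\langle\,,\,\rangle_A$ by a positive $\alpha \in \GL(A)$ rescales the direct sum by $\alpha \oplus \id_B$, and block-diagonal multiplicativity of the Fuglede--Kadison determinant gives $\Det_\tau(\alpha \oplus \id_B) = \Det_\tau(\alpha)$, so the map descends to $\det A \otimes \det B \to \det(A \oplus B)$. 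Bijectivity follows by showing any admissible $\omega$ on $A \oplus B$ is equivalent in $\det(A \oplus B)$ to $\omega|_A \oplus \omega|_B$: the interpolating positive operator is a Gram--Schmidt-type unipotent correction with FK determinant $1$.

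For (2), I would choose a bounded splitting $s: H'' \to H$ of $\beta$ (which exists since short exact sequences in $\cat{H}(\alg{A})$ split at the level of underlying Hilbert spaces) to obtain an isomorphism $H \cong H' \oplus H''$ via $(\alpha, s)$, then compose with (1). The crux is independence of $s$: two splittings $s, s'$ differ by a map $\phi: H'' \to H'$, and the induced automorphism of $H' \oplus H''$ is the unipotent block-triangular operator with identity diagonal and off-diagonal entry $\phi$. A direct path-integral computation, using the straight line $t \mapsto \id + t\,\phi$ viewed through the block decomposition, shows the FK determinant of such an operator is $1$, since strictly triangular operators have vanishing trace.

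For (3), I define $f_*\langle\,,\,\rangle := \langle f^{-1}(-), f^{-1}(-)\rangle$, an admissible inner product on $B$. If $\langle\,,\,\rangle' = \Det_\tau(\alpha)^{-1/2}\langle\,,\,\rangle$ on $A$ with $\alpha$ positive, then $f_*\langle\,,\,\rangle'$ and $f_*\langle\,,\,\rangle$ on $B$ are related via the positive operator $f\alpha f^{-1}$, so well-definedness reduces to the identity $\Det_\tau(f\alpha f^{-1}) = \Det_\tau(\alpha)$. I would prove this conjugation invariance by transporting a defining path $\alpha_t$ via $f$ and using the trace property: replacing an operator $X$ on $A$ by $f X f^{-1}$ on $B$ leaves $\tau$ unchanged, since both sides are computed through the canonical trace on matrix algebras over $\alg{A}$.

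Orientation preservation is automatic throughout, as orthogonal sums, splitting-induced identifications, and pullbacks by isomorphisms all send positive inner products to positive inner products. The main obstacle I foresee is the splitting-independence in (2): while intuitively the FK determinant of a unipotent operator is $1$, a clean verification requires either the path-integral computation sketched above or a careful reduction to conjugation-and-multiplicativity identities for $\Det_\tau$. Statement (3)'s conjugation invariance is standard in spirit, and (1) is essentially bookkeeping once the rescaling compatibility is checked.
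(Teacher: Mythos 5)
The paper gives no internal proof of this lemma---it defers to Farber's work on determinant lines---so the relevant comparison is with the standard argument found there, and your reconstruction follows that route: define each map on representative admissible inner products, check compatibility with the rescaling relation using multiplicativity and conjugation-invariance of $\Det_\tau$, and for (2) reduce to (1) via a splitting whose ambiguity is a unipotent block-triangular automorphism of Fuglede--Kadison determinant $1$. Parts (2) and (3) are correct as written: the path $t\mapsto \id+t\phi$ with $\tau(\alpha_t^{-1}\alpha_t')$ strictly triangular, and the trace identity $\tau(fXf^{-1})=\tau(X)$, are exactly the right ingredients. (One small point in (2): you need the splitting $s$ to be a morphism of $\cat{H}(\alg{A})$, not merely a Hilbert-space splitting; this holds because the orthogonal complement of $\alpha(H')$ with respect to an admissible inner product is $\alg{A}$-invariant.)

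The one genuine error is the bijectivity step in (1). An arbitrary admissible inner product $\omega$ on $A\oplus B$ is related to $\omega|_A\oplus\omega|_B$ by the positive operator $\begin{pmatrix} \id & X\\ X^* & \id\end{pmatrix}$ recording the cross terms; this operator is not unipotent and its Fuglede--Kadison determinant is not $1$ in general---already in the $2\times 2$ scalar case $\begin{pmatrix}1 & x\\ \bar{x} & 1\end{pmatrix}$ has determinant $1-|x|^2$. A Gram--Schmidt factorization writes it as $L\bigl(\id\oplus(\id-X^*X)\bigr)L^*$ with $L$ unipotent, and the middle factor spoils your claim, so $\omega$ and $\omega|_A\oplus\omega|_B$ are in general only proportional, not equal, in $\det(A\oplus B)$. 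Fortunately the step is unnecessary: once the map $\det A\otimes\det B\to\det(A\oplus B)$ is well defined, it sends a representative inner product to an admissible inner product, hence to a nonzero vector, and a nonzero linear map between one-dimensional spaces is automatically an isomorphism---the same ``clearly nontrivial, hence an isomorphism'' shortcut used in the proof of Lemma~\ref{lem:tp}. Replacing the unipotent-correction sentence with that observation completes part (1); everything else, including orientation preservation, stands.
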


    Let $\vir{X}\in\cat{E}(\alg{A})$, $\vir{X}=(A'\xrightarrow{\alpha}A)$. Then the determinant line of $\vir{X}$ is defined as 
    \[\det\vir{X}=\det A\otimes(\det A'/\Ker\alpha)^{-1}.\]
    Here, $V^{-1}=V^*$. It also has a natural orientation. Lemma \ref{lem:det-iso} is naturally generalized to this situation. For a graded object $\vir{X}=\bigoplus_i\vir{X}^i$, define 
    \[\det\vir{X}=\bigotimes_i(\det\vir{X}^i)^{(-1)^i}.\]

    Let $\vir{X}$ be torsion. We may assume that it is given by $A'\xrightarrow{\alpha}A$ where $\alpha$ is injective and has dense image. Fix any admissible inner products $\langle\ ,\ \rangle$ and $\langle\ ,\ \rangle'$ on $A$ and $A'$, we can compute 
    $$(\Det_\tau(\alpha^*\alpha))^{1/2}\langle\ ,\ \rangle(\langle\ ,\ \rangle')^{-1}\in\det\vir{X}.$$
    If $\vir{X}$ satisfies a condition known as $\tau$-triviality \cite{braverman2005l2}, the element is independent of the choice of inner products and one can use it to get a canonical isomorphism $\det\vir{X}\cong\mathbb{R}$. 

    Suppose that $C=(C^\bullet,d)$ is a projective cochain complex. Then $H^\bullet(C)$ is also a graded object in $\cat{E}(\alg{A})$. There is a canonical orientation preserving isomorphism 
    \[\nu_C:\det C\to\det H^\bullet(C)\]
    as in the classical case. 

    \begin{dfn}
        Suppose $C$ is a projective cochain complex and there is a preferred choice $\langle\ ,\ \rangle_i$ on each $C^i$. Denote $\sigma_i\in\det C^i$ to be the element in the determinant line represented by $\langle\,\ \rangle_i$. Define 
        \[\sigma=\prod_{i=0}^n\sigma^{(-1)^i}\in\det C\]
        and the \emph{torsion} of $C$ is 
        \[\rho_C=\nu_C(\sigma)\in\det H^\bullet(C).\]
    \end{dfn}

    We say that $C$ is \emph{of determinant class} if the torsion part of any $H^i(C)$ is $\tau$-trivial, and under this assumption, $H^i(C)\cong H^i_{(2)}(C)$ canonically and we get the usual definition of $L^2$-torsion. If in addition $C$ is \emph{weakly acyclic}, i.e., $H^i_{(2)}(C)=0$ for all $i$, then $\det H^\bullet(C)\cong\mathbb{R}$ canonically, and thus the torsion can be regarded as a real number. It should be emphasized that the torsion depends on the choice of inner products. Note that the inner product on $C^i$ induces an inner product on $H_{\reduce}^i(C)$, and thus trivializes the determinant line. If $C$ is of determinant class, we have 
    \begin{equation}
        \det C\xrightarrow{\nu_C}\det H^\bullet(C)\xrightarrow{\cong} \det H^{\bullet}_{\reduce}(C)\xrightarrow{\cong}\mathbb{R}, \label{eq:detcls}
    \end{equation}
    and thus $\rho_C$ can be viewed as a real number. This coincides with the definition in \cite{lueck2002l2} except that we do not take logarithm and the sign differs because we are using cochain complexes. 
    
    We can view the torsion as a functor $\rho$ that takes a projective cochain complex $C$ with preferred inner products into a pair $(\det H^\bullet(C),\rho_C)$. The source of $\rho$ is the category of projective cochain complexes with preferred inner products. The target of $\rho$ is the category of pointed lines. The advantage of working in the extended category is that $\rho$ is multiplicative, or more generally, it preserves short exact sequences:
    
    Suppose $L,M,N$ are projective cochain complexes and there is a short exact sequence in $\cat{H}(\alg{A})$ 
    \[0\to L\xrightarrow{\alpha}M\xrightarrow{\beta}N\to 0,\]
    i.e., $M$ is a direct sum of $L,N$ as Hilbertian spaces. Then by Property 2 of Lemma \ref{lem:det-iso}, there is a natural isomorphism
    \[\psi_{\alpha,\beta}:\det L\otimes \det N\to \det M.\]
    If $\sigma_L$ and $\sigma_N$ are preferred choices of inner products on $L$ and $N$, then there is a preferred choice of inner products $\sigma_M$ such that
    \[\sigma_M=\psi_{\alpha,\beta}(\sigma_L\otimes\sigma_N).\] 
    Under this assumption, we have 
    \[\rho_M=\rho_L\rho_N,\]
    where we identify $\det H^\bullet(L)\otimes \det H^\bullet(N)$ with $\det H^\bullet(M)$ via isomorphisms $\psi_{\alpha,\beta},\nu_L,\nu_M,\nu_N$. Note that under this identification, if $L,M,N$ are of determinant class and weakly acyclic, then the formula $\rho_M=\rho_L\rho_N$ recovers the usual formula where the torsions are real numbers. To be more precise, we have 

    \begin{lem}\label{lem:split}
        If $L,N$ are projective cochain complexes that are of determinant class and weakly acyclic, and if 
        \[0\to L\xrightarrow{\alpha}M\xrightarrow{\beta}N\to 0,\]
        is exact in $\cat{H}(\alg{A})$, then $M$ is also of determinant class and weakly acyclic; moreover, the isomorphism 
        $$\nu_M\circ\psi_{\alpha,\beta}\circ(\nu_L\otimes\nu_N)^{-1}:\det H^\bullet(L)\otimes \det H^\bullet(N)\to \det H^\bullet(M)$$
        reduces to the multiplication $\mathbb{R}\otimes\mathbb{R}\to\mathbb{R}$. 
    \end{lem}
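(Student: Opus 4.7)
The plan is to establish weak acyclicity of $M$, then its determinant class property, and finally the identification of $\nu_M\circ\psi_{\alpha,\beta}\circ(\nu_L\otimes\nu_N)^{-1}$ with multiplication, using the long exact sequence of extended cohomology throughout. Because $\cat{E}(\alg{A})$ is abelian and $L,M,N$ are projective complexes, the sequence $0\to L\to M\to N\to 0$ produces a long exact sequence
\[\cdots\to H^{i-1}(N)\to H^i(L)\to H^i(M)\to H^i(N)\to H^{i+1}(L)\to\cdots\]
in $\cat{E}(\alg{A})$. By the weak acyclicity of $L$ and $N$, every $H^i(L)$ and $H^i(N)$ lies in $\cat{T}(\alg{A})$; since $\cat{T}(\alg{A})$ is closed under subobjects, quotients, and extensions (a density-of-image argument), each $H^i(M)$ is torsion, so $M$ is weakly acyclic.

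For the determinant class, it remains to show each $H^i(M)$ is $\tau$-trivial. Split off from the long exact sequence the short exact sequence of torsion objects
\[0\to\operatorname{im}(H^i(L)\to H^i(M))\to H^i(M)\to\operatorname{im}(H^i(M)\to H^i(N))\to 0,\]
whose outer terms are a quotient of $H^i(L)$ and a subobject of $H^i(N)$ respectively. Both $H^i(L)$ and $H^i(N)$ are $\tau$-trivial by the determinant class hypothesis on $L,N$. Invoking the stability of $\tau$-triviality under subobjects, quotients, and extensions inside $\cat{T}(\alg{A})$---essentially the additivity of Fuglede--Kadison determinants across short exact sequences as in \cite{braverman2005l2}---then yields $\tau$-triviality of $H^i(M)$.

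With $L,N,M$ all of determinant class and weakly acyclic, each of $\det H^\bullet(L),\det H^\bullet(N),\det H^\bullet(M)$ carries the canonical trivialization to $\mathbb{R}$ of \eqref{eq:detcls}, under which $\rho_L,\rho_N,\rho_M$ correspond to positive real numbers. Fix admissible inner products $\sigma_L,\sigma_N$ on $L,N$ and the induced $\sigma_M=\psi_{\alpha,\beta}(\sigma_L\otimes\sigma_N)$ on $M$. Unwinding the definition of $\rho$,
\[\bigl(\nu_M\circ\psi_{\alpha,\beta}\circ(\nu_L\otimes\nu_N)^{-1}\bigr)(\rho_L\otimes\rho_N)=\nu_M(\sigma_M)=\rho_M,\]
and the formula $\rho_M=\rho_L\rho_N$ recalled just before the lemma identifies the $\mathbb{R}$-linear isomorphism $\mathbb{R}\otimes\mathbb{R}\to\mathbb{R}$ coming from the composition with ordinary multiplication, since such an isomorphism is pinned down by its value on the nonzero decomposable tensor $\rho_L\otimes\rho_N$. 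The substantive obstacle is the extension stability of $\tau$-triviality used in the determinant class step; once that is accepted, the remainder is a direct unwinding of the canonical identifications.
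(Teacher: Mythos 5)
Your proof of the first assertion (that $M$ is weakly acyclic and of determinant class) is sound and is a reasonable self-contained alternative to the paper, which simply cites \cite[Lemma 3.68]{lueck2002l2}: the long exact sequence in $\cat{E}(\alg{A})$ together with the fact that $\cat{T}(\alg{A})$ is a Serre subcategory gives weak acyclicity, and multiplicativity of the Fuglede--Kadison determinant over short exact sequences of torsion objects gives $\tau$-triviality of $H^i(M)$.

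The second assertion, however, is where the real content of the lemma lies, and your argument for it is circular. The identity $\bigl(\nu_M\circ\psi_{\alpha,\beta}\circ(\nu_L\otimes\nu_N)^{-1}\bigr)(\rho_L\otimes\rho_N)=\rho_M$ is a tautology: it holds by the definition of $\sigma_M$ and of $\rho_M$, for \emph{any} isomorphism used in place of $\psi_{\alpha,\beta}$, and it carries no information about how the map interacts with the trivializations $t_L,t_M,t_N:\det H^\bullet(\cdot)\to\mathbb{R}$ of \eqref{eq:detcls}. To conclude that the induced map $\mathbb{R}\otimes\mathbb{R}\to\mathbb{R}$ is multiplication, you must know that $t_M(\rho_M)=t_L(\rho_L)\cdot t_N(\rho_N)$ as real numbers --- but that is exactly the classical multiplicativity of $L^2$-torsion that the lemma is designed to recover, so you are assuming what is to be proved. (The ``formula $\rho_M=\rho_L\rho_N$ recalled just before the lemma'' is the abstract, tautological version, not the real-number version; the paper is explicit that the lemma is what upgrades the former to the latter.) The missing ingredient is a computation with the trivializations themselves: each $t_C$ is built from the canonical elements $(\Det_\tau(\alpha^*\alpha))^{1/2}\langle\ ,\ \rangle(\langle\ ,\ \rangle')^{-1}$ of the torsion objects $H^i(C)$, so compatibility of $t_M$ with $t_L\otimes t_N$ under $\psi_{\alpha,\beta}$ amounts to the determinant identity
\[\Det d_M^*d_M=\Det d_L^*d_L\cdot\Det d_N^*d_N\]
for the triangular differential $d_M=\begin{pmatrix} d_L & \ast \\ 0 & d_N\end{pmatrix}$ on $M\cong L\oplus N$, which holds under the weak acyclicity hypothesis (c.f.\ \cite[Theorem 3.14]{lueck2002l2}) but is not automatic. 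This determinant computation, and the verification that the canonical element $\sigma'_L\otimes\sigma'_N$ is carried to $\sigma'_M$, is the step your proof needs to supply.
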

    \begin{proof}
        The first part is a consequence of \cite[Lemma 3.68]{lueck2002l2}, or one can verify it directly. For the second part, note that $\det H^\bullet(L)=\bigotimes\det H^i(L)^{-1}$, and $H^i(L)=(L^{i-1}\xrightarrow{d}\Ker d|_{L^i})$ is torsion; the similar is hold for $M,N$. We can use $\sigma_L$ to define the inner products on $\Ker d|_{L^i}$ and $L^{i-1}/\Ker d|_{L^{i-1}}$. We can use them to get a canonical element $\sigma'_L\in \det H^\bullet(L)$. If we identify $M=L\oplus N$, then we can write the differential on $M$ as 
        \[d_M=\begin{pmatrix}
            d_L & \ast \\ 
            0 & d_N
        \end{pmatrix}\] 
        The relation 
        \[\Det d_M^*d_M=\Det d_L^*d_L\cdot \Det d_N^*d_N\]
        holds because of the weakly acyclic condition, c.f., \cite[Theorem 3.14]{lueck2002l2}. It follows from a direct calculation that $\nu_M\circ\psi_{\alpha,\beta}\circ(\nu_L\otimes\nu_N)^{-1}$ maps $\sigma'_L\otimes\sigma'_N$ to $\sigma'_M$, and thus the induced map $\mathbb{R}\otimes\mathbb{R}\to\mathbb{R}$ is the multiplication. 
    \end{proof}

    Since the torsion is now defined as an element of the determinant line, to study torsions of different complexes we have to identify their determinant lines in a canonical way. For convenience we introduce the following notion of admissibility. 

    \begin{dfn}
        Let $C_i,D_j$ be projective cochain complexes, and let $a_i,b_j$ be integers. An isomorphism 
        $$\varphi:\bigotimes_i (\det C_i)^{a_i}\to\bigotimes_j(\det D_j)^{b_j}$$
        is called \emph{admissible} if it can be reduced to the canonical map 
        $$\bigotimes_i \mathbb{R}^{a_i}\to\bigotimes_j\mathbb{R}^{b_j}$$
        when each $C_i,D_j$ is of determinant class and weakly acyclic. (The exponents stand for tensor product, not direct sum.)
    \end{dfn}

    Then Lemma \ref{lem:split} is equivalent to say that $\psi_{\alpha,\beta}$ is admissible. 

    \subsection{Combinatorial \texorpdfstring{$L^2$}{L2}-torsion}

    Let $K$ be a connected finite CW complex, with fundamental group $\pi$. Let $(\alg{A},\tau)$ be a finite von Neumann algebra as before. 

    \begin{dfn}
        A (left) \emph{Hilbertian $(\pi,\alg{A})$-bimodule} is a (left) Hilbertian $\alg{A}$-module $H$ with a left $\pi$-action such that every $g\in\pi$ acts as a morphism $L_g:H\to H$ of Hilbertian $\alg{A}$-modules. We say that $H$ is \emph{unimodular} if $\Det_\tau(L_g)=1$ for all $g\in\pi$. 
    \end{dfn}

    Let $C_\bullet(\tilde{K})$ be the cellular chain complex of the universal cover of $K$. The cochain complex 
    \begin{equation}\label{eq:unimod}
        C^\bullet(K;H)=\Hom_{\mathbb{Z}\pi}(C_\bullet(\tilde{K}),H)
    \end{equation}
    is a projective cochain complex in $\cat{H}(\alg{A})$. It defines an isomorphism 
    \[\nu=\nu_{C^\bullet(K;H)}:\det C^\bullet(K;H)\to\det H^\bullet(C^\bullet(K;H)).\]

    If $H$ is unimodular, there is a natural isomorphism 
    \[\psi:(\det H)^{\chi(K)}\to \det C^\bullet(K;H)\]
    that only depends on $K$ and $H$. It follows from the following fact which will be used later. 
    \begin{lem}\label{lem:cell}
        If $c_k$ is the set of $k$-cells of $K$, then
        \[C^k(K;H)\cong\bigoplus_{c_k}H\]
        as Hilbertian $\alg{A}$-modules. The isomorphism depends on the lifting of cells. If $H$ is unimodular, then the isomorphism of determinant lines 
        \[\det C^k(K;H)\cong (\det H)^{\#c_k}\]
        is independent of the lifting and the ordering of the cells. Here, $\#c_k$ is the cardinality of $c_k$. 
    \end{lem}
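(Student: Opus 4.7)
My plan is to prove the first assertion directly from the definitions, and then handle the second by analyzing separately the effect of changing the lift and the effect of reordering. For the first part, fix a lift $\tilde e$ of each $k$-cell $e$ of $K$; these lifts constitute a free $\mathbb{Z}\pi$-basis of $C_k(\tilde K)$, so $C_k(\tilde K)\cong(\mathbb{Z}\pi)^{\oplus\#c_k}$. Applying $\Hom_{\mathbb{Z}\pi}(-,H)$ together with the canonical identification $\Hom_{\mathbb{Z}\pi}(\mathbb{Z}\pi,H)\cong H$ (evaluation at $1$) yields $C^k(K;H)\cong\bigoplus_{c_k}H$ as Hilbertian $\alg{A}$-modules. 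The resulting identification clearly depends on the chosen lifts and on the enumeration.

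For the determinant-line statement, iterate Lemma \ref{lem:det-iso}(1) to obtain a canonical orientation-preserving isomorphism $\det(H^{\oplus\#c_k})\cong(\det H)^{\otimes\#c_k}$, and then show the induced map $\det C^k(K;H)\cong(\det H)^{\otimes\#c_k}$ is unaffected by each choice. Reordering the cells corresponds to permuting tensor factors of the one-dimensional space $(\det H)^{\otimes\#c_k}$, which is trivially the identity since any transposition of a one-dimensional tensor square is already the identity. Changing the lift of a cell from $\tilde e$ to $g\tilde e$ replaces the corresponding summand of the identification by $L_g:H\to H$, so two global choices of lifts yield identifications that differ by $\bigoplus_i L_{g_i}$. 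Under the above tensor decomposition, this induces the scalar $\prod_i(L_{g_i})_*$ on the determinant line.

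The one step that requires actual calculation is the claim that for an isomorphism $f:H\to H$ in $\cat{H}(\alg{A})$, the map $f_*:\det H\to\det H$ from Lemma \ref{lem:det-iso}(3) is multiplication by $\Det_\tau f$. To see this, unwind the definition: $\langle f^{-1}x,f^{-1}y\rangle=\langle(ff^*)^{-1}x,y\rangle$ identifies the positive operator $\alpha$ appearing in \eqref{eq:ip}, and the defining relation of $\det H$ gives
\[f_*[\langle\ ,\ \rangle]=\Det_\tau(\alpha)^{-1/2}[\langle\ ,\ \rangle]=\Det_\tau(ff^*)^{1/2}[\langle\ ,\ \rangle]=\Det_\tau f\cdot[\langle\ ,\ \rangle],\]
using $\Det_\tau f^*=\Det_\tau f$. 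By unimodularity, $\Det_\tau L_{g_i}=1$ for every $i$, so the lift ambiguity is absorbed and the induced isomorphism $\det C^k(K;H)\cong(\det H)^{\otimes\#c_k}$ is independent of both choices. The main obstacle is precisely this bookkeeping with the quotient definition of $\det H$; no deeper conceptual difficulty arises.
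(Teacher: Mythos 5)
The paper states this lemma without proof (treating it as a known fact from the standard setup), and your argument supplies exactly the bookkeeping it takes for granted: the free $\mathbb{Z}\pi$-basis of lifted cells, the computation that $f_*$ on $\det H$ is multiplication by $\Det_\tau f$ (so unimodularity kills the lift ambiguity), and the triviality of permutations on a tensor power of a line. The proof is correct and is the intended argument.
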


    \begin{dfn}\label{def:tor}
        If there is a preferred choice of $\sigma\in\det H$, then
        \[\rho(K;H)=\nu(\psi(\sigma^{\chi(K)}))\in\det H^\bullet(K;H)\]
        is called the \emph{combinatorial $L^2$-torsion} of $K$ with coefficients in $H$. 
    \end{dfn}

    If $\alg{A}$ is the group von Neumann algebra of $\pi$ and $H=l^2(\pi)$, then we write 
    \[C^\bullet(K;H)=C^\bullet_{(2)}(K),\quad H^\bullet(C^\bullet(K;H))=H^\bullet_{(2)}(K),\]
    \[\rho(K;H)=\rho_{(2)}(K)\in \det H^{\bullet}_{(2)}(K).\]
    There is a natural choice of $\sigma$ which is the standard $l^2$-inner product, so the torsion is well defined as an element of the determinant line. 

    In fact, we will use a general setting to deduce the sum formula and the product formula. Let $X$ be a finite $\pi$-CW complex where $\pi$ is any discrete group. Let $H$ be a Hilbertian $(\pi,\alg{A})$-bimodule. Then $C_\bullet(X)$ is a $\mathbb{Z}\pi$-module and we set 
    \[C^\bullet(X;H)=\Hom_{\mathbb{Z}\pi}(C_\bullet(X),H).\]
    The above results can be easily generalized to this case. For example, Lemma \ref{lem:cell} holds if we use $c_k$ to denote the set of equivariant $k$-cells. In the isomorphism $\psi$, $\chi(K)$ is replaced by $\chi(\pi\backslash X)=\sum_k (-1)^{\#c_k}$. 

    \section{The sum formula and the product formula}

    \subsection{The sum formula}

    Suppose that $X$ is a finite $\pi$-CW complex where $\pi$ is a discrete group, and that $(\alg{A},\tau)$ is a finite von Neumann algebra. Let $H$ be a Hilbertian $(\pi,\alg{A})$-bimodule with a preferred choice of $\sigma\in\det H$.  

    \begin{prp}\label{prp:sum}
        Suppose that 
        \[\begin{tikzcd}
            X_0 \rar{j_1} \dar[swap]{j_2} & X_1 \dar{i_1} \\ 
            X_2 \rar{i_2} & X 
        \end{tikzcd}\]
        is a $\pi$-pushout diagram, where $X_0,X_1,X_2,X$ are finite $\pi$-CW complexes. Assume that $j_1$ is an inclusion of a subcomplex, and that $j_2$ is cellular. 

        Then there exists a short exact sequence 
        \[0\to C^\bullet(X;H)\to C^\bullet(X_1;H)\oplus C^\bullet(X_2;H)\to C^\bullet(X_0;H)\to 0\]
        in $\cat{H}(\alg{A})$. This defines 
        \[\det H^\bullet(X;H)\otimes \det H^\bullet(X_0;H)\cong \det H^\bullet(X_1;H)\otimes \det H^\bullet(X_2;H).\]
        Under this identification, the torsions are related by 
        \[\rho(X;H)=\rho(X_1;H)\rho(X_2;H)\rho(X_0;H)^{-1}.\]
    \end{prp}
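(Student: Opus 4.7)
The plan is to apply the multiplicativity of torsion for short exact sequences of projective cochain complexes (the general statement preceding Lemma \ref{lem:split}) to the Mayer--Vietoris sequence associated with the $\pi$-pushout.

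First, at the level of cellular chains of the universal cover, the pushout yields a short exact sequence of free $\mathbb{Z}\pi$-modules
\[
0 \to C_\bullet(X_0) \xrightarrow{(j_{1*},-j_{2*})} C_\bullet(X_1) \oplus C_\bullet(X_2) \xrightarrow{i_{1*}+i_{2*}} C_\bullet(X) \to 0.
\]
This sequence splits in each degree: because $j_1$ is a subcomplex inclusion and $j_2$ is cellular, the set of equivariant cells of $X$ is the disjoint union of the cells of $X_2$ and the cells of $X_1 \setminus X_0$, so in each degree the middle term is canonically a direct sum of (cells of $X_0$) and (cells of $X$). Applying the contravariant functor $\Hom_{\mathbb{Z}\pi}(-,H)$ to this split sequence produces the advertised short exact sequence of projective cochain complexes in $\cat{H}(\alg{A})$, with the claimed identification of determinant lines furnished by $\psi_{\alpha,\beta}$ and the isomorphisms $\nu$ for each of the four complexes.

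Next, I would fix a coherent system of cell lifts: lift each $\pi$-cell of $X_0$ once, transport these via $j_1,j_2$ to lifts in $X_1,X_2$, and lift the remaining cells of $X_1,X_2$ arbitrarily, which then determines lifts of all cells of $X$. Via Lemma \ref{lem:cell}, the preferred element $\sigma \in \det H$ then induces preferred volume forms $\sigma_{X_i}\in \det C^\bullet(X_i;H)$ and $\sigma_X\in \det C^\bullet(X;H)$. The cell decomposition above shows that these are compatible with the Mayer--Vietoris sequence in the sense required for the sum formula, namely
\[
\sigma_{X_1} \otimes \sigma_{X_2} = \psi_{\alpha,\beta}\bigl(\sigma_X \otimes \sigma_{X_0}\bigr),
\]
with $\alpha,\beta$ the maps of the short exact sequence.

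The multiplicativity of the torsion under short exact sequences with compatible preferred inner products then gives $\rho_M = \rho(X;H)\,\rho(X_0;H)$, where $M = C^\bullet(X_1;H) \oplus C^\bullet(X_2;H)$. Since the torsion of a direct sum factors as the tensor product of the summand torsions, $\rho_M = \rho(X_1;H)\,\rho(X_2;H)$, and rearranging yields the stated identity. The main obstacle is verifying the compatibility relation for preferred volume forms when $H$ is not assumed unimodular: the isomorphism of Lemma \ref{lem:cell} then depends on the lifts, and one must check that the coherent system of lifts described above produces identifications in which $\sigma_{X_1}\otimes\sigma_{X_2}$ and $\psi_{\alpha,\beta}(\sigma_X\otimes\sigma_{X_0})$ really agree. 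Once this bookkeeping is done, the formula follows formally from the sum formula.
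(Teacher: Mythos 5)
Your proof is correct and follows essentially the same route as the paper: the Mayer--Vietoris short exact sequence of cochain complexes, the determinant-line identification from Lemma \ref{lem:det-iso}, and multiplicativity of $\rho$ together with compatibility of the preferred volume forms, which the paper reduces to the Euler characteristic identity $\chi(\pi\backslash X)+\chi(\pi\backslash X_0)=\chi(\pi\backslash X_1)+\chi(\pi\backslash X_2)$. The unimodularity issue you flag as the ``main obstacle'' does not actually arise, since $\rho(\,\cdot\,;H)$ is only defined (Definition \ref{def:tor}, via Lemma \ref{lem:cell}) when $H$ is unimodular; under that standing assumption the determinant-line isomorphisms are independent of the chosen lifts and your compatibility relation for the $\sigma$'s is exactly the Euler characteristic identity.
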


    \begin{proof}
        From the short exact sequence of $\mathbb{Z}\pi$-modules 
        \[0\to C_\bullet(X_0)\to C_\bullet(X_1)\oplus C_\bullet(X_2)\to C_\bullet(X)\to 0,\]
        we obtain a short exact sequence in $\cat{H}(\alg{A})$: 
        \[0\to C^\bullet(X;H)\to C^\bullet(X_1;H)\oplus C^\bullet(X_2;H)\to C^\bullet(X_0;H)\to 0.\]
        Hence,
        \[\det H^\bullet(X;H)\otimes \det H^\bullet(X_0;H)\cong \det H^\bullet(X_1;H)\otimes \det H^\bullet(X_2;H)\]
        by Properties 2 and 3 of Lemma \ref{lem:det-iso}. 

        Using Definition \ref{def:tor}, the required formula follows from the sum formula for Euler characteristic
        \[\chi(\pi\backslash X)=\chi(\pi\backslash X_1)+\chi(\pi\backslash X_2)-\chi(\pi\backslash X_0).\]
    \end{proof}

    Next we indicate how to recover the classical formula in \cite[Theorem 3.93(2)]{lueck2002l2}. 

    \begin{lem}\label{lem:sum}
        The isomorphism in Proposition \ref{prp:sum} is natural, i.e., if three of $X_0,X_1,X_2,X$ are weakly acyclic and of determinant class, then all are weakly acyclic and of determinant class, and the isomorphism 
        \[\det H^\bullet(X;H)\otimes \det H^\bullet(X_0;H)\cong \det H^\bullet(X_1;H)\otimes \det H^\bullet(X_2;H)\]
        can be reduced to the canonical map $\mathbb{R}\otimes\mathbb{R}\to\mathbb{R}\otimes\mathbb{R}$. 
    \end{lem}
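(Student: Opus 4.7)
The plan is to feed the short exact sequence
\[0 \to C^\bullet(X;H) \to C^\bullet(X_1;H) \oplus C^\bullet(X_2;H) \to C^\bullet(X_0;H) \to 0\]
produced in the proof of Proposition \ref{prp:sum} into Lemma \ref{lem:split}, after first handling the small mismatch that the middle term is a direct sum rather than a single complex of the form $C^\bullet(X_i;H)$. The bridging observation is elementary: a direct sum of projective cochain complexes is weakly acyclic and of determinant class if and only if each summand is, because extended cohomology commutes with direct sums and both vanishing of reduced cohomology and $\tau$-triviality of torsion parts respect finite direct sum decompositions.

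For the first assertion, set $L = C^\bullet(X;H)$, $M = C^\bullet(X_1;H) \oplus C^\bullet(X_2;H)$, and $N = C^\bullet(X_0;H)$. I would invoke the standard ``two out of three'' property for AC+DC under a short exact sequence of projective cochain complexes (cf.\ \cite[\S3]{lueck2002l2}; this rests on the long exact sequence in extended cohomology together with multiplicativity of the Fuglede-Kadison determinant, exactly as in the proof of Lemma \ref{lem:split}). Whichever three of $X, X_0, X_1, X_2$ are assumed AC+DC, the bridging observation always lets me identify two of $\{L, M, N\}$ as AC+DC; the 2-out-of-3 property then yields the third, and the bridging observation is applied once more, if needed, to recover the missing $X_i$.

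For the second assertion, factor the isomorphism of Proposition \ref{prp:sum} as
\[\det H^\bullet(X;H) \otimes \det H^\bullet(X_0;H) \xrightarrow{\Phi_1} \det H^\bullet\bigl(C^\bullet(X_1;H) \oplus C^\bullet(X_2;H)\bigr) \xrightarrow{\Phi_2} \det H^\bullet(X_1;H) \otimes \det H^\bullet(X_2;H),\]
where $\Phi_1$ is the cohomology-level isomorphism induced by the short exact sequence (Property 2 of Lemma \ref{lem:det-iso} combined with $\nu_L, \nu_M, \nu_N$) and $\Phi_2$ comes from the canonical splitting of cohomology over a direct sum (Property 1). Lemma \ref{lem:split} is precisely the assertion that $\Phi_1$ is admissible. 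The map $\Phi_2$ is also admissible: cohomology of a direct sum splits canonically and Fuglede-Kadison determinants factor over block-diagonal morphisms, so in the AC+DC trivialization $\Phi_2$ reduces to the identity. Since admissibility is closed under composition, the full isomorphism reduces to the canonical map $\mathbb{R} \otimes \mathbb{R} \to \mathbb{R} \otimes \mathbb{R}$.

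The main delicate point, already invoked in Lemma \ref{lem:split}, is the 2-out-of-3 property for AC+DC under short exact sequences of projective cochain complexes; I would either cite Lück directly or re-derive it from the long exact sequence in extended cohomology combined with the product formula for Fuglede-Kadison determinants.
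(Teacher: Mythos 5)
Your proposal is correct and follows essentially the same route as the paper, which simply cites L\"uck for the two-out-of-three statement and then invokes the short exact sequence from Proposition \ref{prp:sum} together with Lemma \ref{lem:split}; your version merely spells out the direct-sum bookkeeping and the factorization of the isomorphism that the paper leaves implicit.
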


    \begin{proof}
        The first part is proved in \cite[Theorem 3.93(2)]{lueck2002l2}. This second part follows from the proof of Proposition \ref{prp:sum} and Lemma \ref{lem:split}. 
    \end{proof}

    \begin{cor}
        Suppose that 
        \[\begin{tikzcd}
            K_0 \rar{j_1} \dar[swap]{j_2} & K_1 \dar{i_1} \\ 
            K_2 \rar{i_2} & K 
        \end{tikzcd}\]
        is a pushout diagram, where $K_0,K_1,K_2,K$ are finite CW complexes. Assume that $j_1$ is an inclusion of a subcomplex, and that $j_2$ is cellular. Assume that $i_1,i_2$ and $i_0=i_1j_1$ induce injections on fundamental groups. 

        Then there exists a short exact sequence 
        \[0\to C^\bullet(K;H)\to C^\bullet(K_1;H)\oplus C^\bullet(K_2;H)\to C^\bullet(K_0;H)\to 0\]
        in $\cat{H}(\alg{A})$. This defines 
        \[\det H^\bullet(K;H)\otimes \det H^\bullet(K_0;H)\cong \det H^\bullet(K_1;H)\otimes \det H^\bullet(K_2;H).\]
        Under this identification, the torsions are related by 
        \[\rho(K;H)=\rho(K_1;H)\rho(K_2;H)\rho(K_0;H)^{-1}.\]
    \end{cor}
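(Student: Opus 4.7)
The plan is to reduce to Proposition \ref{prp:sum} by passing to the universal cover. Let $\pi=\pi_1(K)$ and let $p\colon\tilde K\to K$ be the universal covering map. Setting $X_i=p^{-1}(K_i)$ for $i=0,1,2$ and $X=\tilde K$ gives $\pi$-invariant subcomplexes that assemble into a $\pi$-pushout diagram of the same shape.

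First I would use the injectivity of $\pi_1(K_i)\hookrightarrow \pi$ to identify each $X_i$ as a disjoint union of copies of the universal cover $\tilde K_i$ indexed by $\pi/\pi_1(K_i)$. At the level of cellular chains this gives $C_\bullet(X_i)\cong\mathbb{Z}\pi\otimes_{\mathbb{Z}\pi_1(K_i)}C_\bullet(\tilde K_i)$, and tensor-hom adjunction then yields a canonical identification $C^\bullet(X_i;H)\cong C^\bullet(K_i;H)$, where on the right-hand side $H$ is regarded as a $(\pi_1(K_i),\alg{A})$-bimodule via $i_i$. Since $\pi\backslash X_i=K_i$, the Euler-characteristic exponents $\chi(\pi\backslash X_i)$ and $\chi(K_i)$ coincide, so this identification matches the preferred inner products and sends $\rho(X_i;H)$ to $\rho(K_i;H)$.

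The main step is the construction of the short exact sequence. Each cell of $K$ belongs to exactly one of $K_0$, $K_1\setminus K_0$, or $K_2\setminus K_0$; the corresponding equivariant partition of cells of $\tilde K$ yields a short exact sequence of free $\mathbb{Z}\pi$-chain complexes
\[0\to C_\bullet(X_0)\to C_\bullet(X_1)\oplus C_\bullet(X_2)\to C_\bullet(\tilde K)\to 0.\]
Applying the exact functor $\Hom_{\mathbb{Z}\pi}(-,H)$ on free modules and using the identification from the previous paragraph gives the asserted short exact sequence in $\cat{H}(\alg{A})$. Proposition \ref{prp:sum} applied to the $\pi$-pushout $X_0\to X_i\to X$ then produces the canonical isomorphism of determinant lines and the multiplicative relation among the torsions.

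The only nontrivial point to verify is the first step, namely that $\pi_1$-injectivity guarantees that $X_i$ splits as a disjoint union of copies of $\tilde K_i$. This is the standard covering-space consequence of $\pi_1(K_i)\hookrightarrow\pi_1(K)$, but it is the key input that makes the two descriptions of the cochain complex with coefficients in $H$ agree, so that the $\pi$-CW version of the sum formula translates cleanly into the stated formula for the original CW pushout.
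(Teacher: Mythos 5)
Your proposal is correct and is essentially the argument the paper intends: the paper gives no written proof of this corollary, but the remark immediately following it ("The injectivity condition is necessary to lift the pushout diagram to a $\pi_1(K)$-pushout diagram") indicates exactly your strategy of pulling back along the universal cover, using $\pi_1$-injectivity to identify each preimage with $\pi\times_{\pi_1(K_i)}\tilde K_i$, matching the cochain complexes via induction/adjunction, and then invoking Proposition \ref{prp:sum}. Your write-up just supplies the details the paper leaves implicit (the only minor point of care being that one should take the pullback $K_i\times_K\tilde K$ rather than the set-theoretic preimage when $i_1$ is not injective).
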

    The injectivity condition is necessary to lift the pushout diagram to a $\pi_1(K)$-pushout diagram. 

    \subsection{The product formula}

    Suppose that $(\alg{A}_1,\tau_1)$ and $(\alg{A}_2,\tau_2)$ are finite von Neumann algebras. Let 
    \[(\alg{A},\tau)=(\alg{A}_1\otimes\alg{A}_2,\tau_1\otimes\tau_2)\] 
    be the tensor product. Let $H_i\in\cat{H}(\alg{A}_i)$ and $\alpha_i\in\GL(H_i)$, $i=1,2$. Then we have 

    \begin{lem}\label{lem:det-tensor}
        Let $\dim H_i=\tau_i(\id_{H_i})$ be the von Neumann dimension. Then 
        \[\Det_\tau(\alpha_1\otimes\alpha_2)=(\Det_{\tau_1} \alpha_1)^{\dim H_2}(\Det_{\tau_2} \alpha_2)^{\dim H_1}.\]
    \end{lem}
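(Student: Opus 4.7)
The plan is to apply the path-integral definition of the Fuglede--Kadison determinant directly to a tensor product of paths. Choose piecewise smooth paths $\alpha_{i,t}$ in $\GL(H_i)$ with $\alpha_{i,0} = \id_{H_i}$ and $\alpha_{i,1} = \alpha_i$, for $i = 1,2$. I would then consider the path
\[
\gamma_t = \alpha_{1,t} \otimes \alpha_{2,t} \in \GL(H_1 \otimes H_2),
\]
which is piecewise smooth, satisfies $\gamma_0 = \id$ and $\gamma_1 = \alpha_1 \otimes \alpha_2$, and has inverse $\alpha_{1,t}^{-1} \otimes \alpha_{2,t}^{-1}$. By the Leibniz rule,
\[
\gamma_t^{-1}\gamma_t' = \bigl(\alpha_{1,t}^{-1}\alpha_{1,t}'\bigr) \otimes \id_{H_2} + \id_{H_1} \otimes \bigl(\alpha_{2,t}^{-1}\alpha_{2,t}'\bigr).
\]

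The next step is to apply the trace $\tau = \tau_1 \otimes \tau_2$ and use the tensor product formula $\tau(a_1 \otimes a_2) = \tau_1(a_1)\tau_2(a_2)$ together with $\tau_i(\id_{H_i}) = \dim H_i$ to obtain
\[
\tau\bigl(\gamma_t^{-1}\gamma_t'\bigr) = \dim H_2 \cdot \tau_1\bigl(\alpha_{1,t}^{-1}\alpha_{1,t}'\bigr) + \dim H_1 \cdot \tau_2\bigl(\alpha_{2,t}^{-1}\alpha_{2,t}'\bigr).
\]
Taking real parts and integrating over $[0,1]$, path-independence of the Fuglede--Kadison determinant yields
\[
\ln \Det_\tau(\alpha_1 \otimes \alpha_2) = \dim H_2 \cdot \ln \Det_{\tau_1}\alpha_1 + \dim H_1 \cdot \ln \Det_{\tau_2}\alpha_2,
\]
and exponentiating gives the desired identity.

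The only point requiring care is an analytic preliminary rather than an algebraic obstacle: one must know that piecewise smooth paths inside $\GL(H_i)$ from $\id_{H_i}$ to $\alpha_i$ exist, and that $\alpha_{i,t}^{-1}\alpha_{i,t}'$ lies in a domain on which $\tau_i$ is defined, so that each summand in the integrand makes sense. These are exactly the hypotheses built into the definition of $\Det_{\tau_i}$ recalled in \S2.2, so inheriting them on each tensor factor suffices; no new analytic input is needed beyond multiplicativity of the tensor product trace and the Leibniz rule.
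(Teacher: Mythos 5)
Your proof is correct and follows essentially the same approach as the paper: both compute $\ln\Det_\tau$ via the path-integral definition and the multiplicativity $\tau(a_1\otimes a_2)=\tau_1(a_1)\tau_2(a_2)$, the only (immaterial) difference being that you use the diagonal path $\alpha_{1,t}\otimes\alpha_{2,t}$ with the Leibniz rule, while the paper concatenates two segments that vary one tensor factor at a time.
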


    \begin{proof}
        Let $\alpha_{i,t}$ be a smooth path from $\id_{H_i}$ to $\alpha_i$. Then 
        \[\alpha_t=\begin{cases}
            \alpha_{1,2t}\otimes\id_{H_2}, & t\in[0,1/2] \\
            \id_{H_1}\otimes \alpha_{2,2t-1}, & t\in[1/2,1]
        \end{cases}\]
        is a piecewise smooth path from $\id_{H}$ to $\alpha_1\otimes\alpha_2$. Computing the Fuglede-Kadison determinant along this path, the result follows easily.
    \end{proof}

    \begin{lem}\label{lem:tp}
        Assume that $H_1,H_2$ both have von Neumann dimension $1$. There is a natural orientation-preserving isomorphism 
        \[\det_\tau H_1\otimes H_2\cong\det_{\tau_1} H_1\otimes\det_{\tau_2} H_2.\]
    \end{lem}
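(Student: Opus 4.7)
The plan is to build the isomorphism directly on the level of admissible inner products, and then use Lemma \ref{lem:det-tensor} to check that it is well-defined modulo the relations that define the determinant lines. Given admissible inner products $\langle\,,\,\rangle_i$ on $H_i$ for $i=1,2$, their tensor product $\langle\,,\,\rangle_1\otimes\langle\,,\,\rangle_2$ is an admissible inner product on $H_1\otimes H_2$ as a Hilbertian $\alg{A}$-module. I would define
\[\Phi:\det_{\tau_1}H_1\otimes\det_{\tau_2}H_2\longrightarrow\det_\tau(H_1\otimes H_2)\]
on generators by
\[\Phi\bigl([\langle\,,\,\rangle_1]\otimes[\langle\,,\,\rangle_2]\bigr)=[\langle\,,\,\rangle_1\otimes\langle\,,\,\rangle_2].\]
The target of the lemma is the inverse of this map.

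The one thing to check is that $\Phi$ descends to the quotients defining the determinant lines. If we replace $\langle\,,\,\rangle_i$ by $\langle\,,\,\rangle_i'=\langle\alpha_i(\cdot),\cdot\rangle_i$ for positive $\alpha_i\in\GL(H_i)$, then by definition of $\det_{\tau_i}H_i$ we have $[\langle\,,\,\rangle_i']=\Det_{\tau_i}(\alpha_i)^{-1/2}[\langle\,,\,\rangle_i]$, so the left-hand side scales by $\bigl(\Det_{\tau_1}\alpha_1\cdot\Det_{\tau_2}\alpha_2\bigr)^{-1/2}$. On the right-hand side, $\langle\,,\,\rangle_1'\otimes\langle\,,\,\rangle_2'=\langle(\alpha_1\otimes\alpha_2)(\cdot),\cdot\rangle$, so it scales by $\Det_\tau(\alpha_1\otimes\alpha_2)^{-1/2}$. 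By Lemma \ref{lem:det-tensor} together with the hypothesis $\dim H_1=\dim H_2=1$, this factor is exactly $\bigl(\Det_{\tau_1}\alpha_1\cdot\Det_{\tau_2}\alpha_2\bigr)^{-1/2}$. Hence the two sides transform identically, and $\Phi$ is well-defined.

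It is then immediate that $\Phi$ is a nonzero linear map between one-dimensional real vector spaces, hence an isomorphism, and that it sends the positively oriented generator (a product of honest inner products) to a positively oriented generator (an honest inner product on $H_1\otimes H_2$), so it preserves orientation. Naturality in $H_1,H_2$ follows from naturality of the tensor product of inner products together with Property 3 of Lemma \ref{lem:det-iso}. The only real content, and the reason the dimension hypothesis cannot be dropped, is the matching of exponents in Lemma \ref{lem:det-tensor}: without $\dim H_1=\dim H_2=1$ the two scaling factors differ, and one would obtain at best a map between the appropriate tensor powers of the determinant lines rather than a canonical isomorphism of the lines themselves.
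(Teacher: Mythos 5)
Your proposal is correct and follows essentially the same route as the paper: define the map on representatives by tensoring admissible inner products, use Lemma \ref{lem:det-tensor} with the dimension-one hypothesis to verify it descends to the determinant lines, and conclude it is an isomorphism since it is a nonzero map of one-dimensional spaces. You merely spell out the well-definedness computation that the paper leaves implicit.
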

    
    \begin{proof}
        By Lemma \ref{lem:det-tensor}, we get a well defined map 
        \[\det_{\tau_1} H_1\otimes\det_{\tau_2} H_2\to \det_\tau H_1\otimes H_2,\]
        \[\langle\ ,\ \rangle_1\otimes\langle\ ,\ \rangle_2 \mapsto (\langle v\otimes w, v'\otimes w' \rangle=\langle v,v'\rangle_1\langle w,w'\rangle_2).\]
        The function map is clearly nontrivial, and thus is an isomorphism. 
    \end{proof}

    \begin{prp}\label{prp:product}
        Assume that $X_i$ be a finite $\pi_i$-CW complex, $i=1,2$. Let $H_i$ be a unimodular Hilbertian $(\pi_i,\alg{A}_i)$-bimodule with von Neumann dimension $1$. Then there is a natural isomorphism 
        \[\det_\tau C^\bullet(X_1\times X_2;H_1\otimes H_2)\cong(\det_{\tau_1}C^\bullet(X_1;H_1))^{\chi(\pi_2\backslash X_2)}\otimes(\det_{\tau_2}C^\bullet(X_2;H_2))^{\chi(\pi_1\backslash X_1)}.\]
        Under this identification, 
        \[\rho(X_1\times X_2;H_1\otimes H_2)=\rho(X_1;H_1)^{\chi(\pi_2\backslash X_2)}\rho(X_2;H_2)^{\chi(\pi_1\backslash X_1)},\]
        if $\sigma\in\det_\tau H_1\otimes H_2$ is chosen to be $\sigma_1\sigma_2$ where $\sigma_i\in\det_{\tau_i}H_i$. 
    \end{prp}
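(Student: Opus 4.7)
The plan is to use the product cellular structure, together with Lemmas \ref{lem:det-tensor}, \ref{lem:tp}, and \ref{lem:cell}, to reduce everything to a Künneth-style identification that can be verified cell by cell. The equivariant cells of $X_1\times X_2$ for the $\pi_1\times\pi_2$-action are precisely products of equivariant cells of $X_1$ and $X_2$, so one has an isomorphism of $\mathbb{Z}(\pi_1\times\pi_2)$-modules $C_\bullet(X_1\times X_2)\cong C_\bullet(X_1)\otimes C_\bullet(X_2)$, and hence an isomorphism of projective cochain complexes
\[
C^\bullet(X_1\times X_2;H_1\otimes H_2)\cong C^\bullet(X_1;H_1)\otimes C^\bullet(X_2;H_2)
\]
viewed as total complexes. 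Note that $H_1\otimes H_2$ is automatically unimodular by Lemma \ref{lem:det-tensor}, so Definition \ref{def:tor} applies on the left-hand side.

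To construct the determinant-line isomorphism, I would apply Lemma \ref{lem:cell} on each side. Writing $c_p$ for the number of equivariant $p$-cells of $X_1$ and $d_q$ for the number of $q$-cells of $X_2$, one has $\det C^p(X_1;H_1)\cong(\det_{\tau_1}H_1)^{c_p}$, $\det C^q(X_2;H_2)\cong(\det_{\tau_2}H_2)^{d_q}$, and $\det_\tau C^n(X_1\times X_2;H_1\otimes H_2)\cong(\det_\tau H_1\otimes H_2)^{\sum_{p+q=n}c_pd_q}$. Combining these with Lemma \ref{lem:tp} applied on each product cell and taking the alternating tensor product in $n$, the identity $\chi(\pi_1\backslash X_1)\,\chi(\pi_2\backslash X_2)=\sum_{p,q}(-1)^{p+q}c_p d_q$ yields the stated isomorphism of determinant lines. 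Unimodularity of $H_1,H_2$ guarantees independence from the lifts and orderings of cells.

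For the torsion identity itself, the cleanest route is through admissibility. The isomorphism just constructed is assembled entirely from the canonical maps of Lemmas \ref{lem:det-iso}, \ref{lem:tp}, and the cellular identifications, so it is admissible: when both $C^\bullet(X_i;H_i)$ are weakly acyclic and of determinant class, so is the tensor complex (by compatibility of Fuglede–Kadison determinants under tensor products via Lemma \ref{lem:det-tensor}), and the identification reduces to the canonical map $\mathbb{R}\otimes\mathbb{R}\to\mathbb{R}$. In that classical case the desired identity is the product formula for combinatorial $L^2$-torsion from \cite{lueck2002l2}. Since both sides of the asserted equation arise from the common input $\sigma_1,\sigma_2$ through the functorial constructions $\psi$ and $\nu$ applied to matching cochain data, the general case follows from the classical one via admissibility, together with the compatibility $\sigma=\sigma_1\sigma_2$ built into Lemma \ref{lem:tp}.

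The main obstacle is precisely the verification that $\nu_{C_1\otimes C_2}$ is compatible, under the Künneth-type identification, with $\nu_{C_1}$ and $\nu_{C_2}$ in the extended abelian category $\cat{E}(\alg{A})$, where a naive Künneth formula may produce additional torsion contributions from the interaction of the Novikov–Shubin parts of $H^\bullet(C_1)$ and $H^\bullet(C_2)$. The admissibility framework of \S2 is designed to sidestep this subtlety: it pins the isomorphism down on the classical locus, which suffices for the torsion identity to propagate. Carrying this out carefully — i.e., checking that admissibility is not merely weak uniqueness up to a locally trivial scalar but genuinely forces the identity of elements of the determinant line — will be the most delicate point, and I would handle it by tracking the chosen inner products through the cellular isomorphism before any passage to cohomology.
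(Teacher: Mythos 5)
Your construction of the determinant-line isomorphism is essentially the paper's: the paper organizes the same cell-by-cell computation (Lemma \ref{lem:cell}, Lemma \ref{lem:tp}, alternating products, the identity $\chi_1\chi_2=\sum_{p,q}(-1)^{p+q}c_pd_q$) as an induction on the cells of $X_2$, but that is only a difference of bookkeeping. The gap is in your route to the torsion identity. You propose to verify it on the classical locus (both complexes weakly acyclic and of determinant class, where it is L\"uck's product formula) and then let ``the general case follow from the classical one via admissibility.'' This does not work: admissibility is a normalization property of the \emph{isomorphism} of determinant lines --- it says what the isomorphism reduces to \emph{when} the complexes happen to be of determinant class and weakly acyclic --- and for a fixed pair $(X_1,H_1)$, $(X_2,H_2)$ not satisfying those hypotheses there is no classical instance to compare against and no continuity or density mechanism to propagate an identity of elements from the classical locus to the general one. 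Admissibility constrains the map, not the images of particular elements of $\det H^\bullet$.

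The correct argument --- which you in fact gesture at in your final sentence --- is the one the paper uses: since $\rho(K;H)=\nu(\psi(\sigma^{\chi}))$ by Definition \ref{def:tor}, and since the identification of the cohomological determinant lines is \emph{defined} by transporting the cochain-level isomorphism along the canonical maps $\nu$, the torsion identity is equivalent to the purely cochain-level statement that $\psi(\sigma^{\chi(\pi_1\backslash X_1)\chi(\pi_2\backslash X_2)})$ corresponds to $\psi(\sigma_1^{\chi_1})^{\chi_2}\otimes\psi(\sigma_2^{\chi_2})^{\chi_1}$ under your cellular isomorphism. That is exactly the Euler-characteristic bookkeeping plus Lemma \ref{lem:tp}, with no passage to cohomology and hence no K\"unneth issue at all: your worry about ``additional torsion contributions from the interaction of the Novikov--Shubin parts'' is moot, because no K\"unneth map $\det H^\bullet(C_1)\otimes\det H^\bullet(C_2)\to\det H^\bullet(C_1\otimes C_2)$ is ever invoked --- the identification of cohomological determinant lines is conjugation of the cochain-level one by $\nu_{C_1\otimes C_2}$ and $\nu_{C_1}\otimes\nu_{C_2}$. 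Replace the admissibility step by this direct tracking of the preferred elements $\psi(\sigma^\chi)$ and the proof is complete; admissibility is only needed afterwards (as in Lemma \ref{lem:prod}) to recover the classical real-valued formula.
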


    \begin{proof}
        Note that $C^\bullet(X_1\times X_2;H_1\otimes H_2)\cong C^\bullet(X_1;H_1)\otimes C^\bullet(X_2;H_2)$. Induction on the number of cells in $X_2$. The basis step is when $X_2=\pi_2\cdot e^i$ is a single equivariant cell. Let $c_k$ be the set of equivariant $k$-cells in $X_1$. In this proof we will write $=$ for any canonical isomorphism. We compute the $n$-th position 
        \[
            [C^\bullet(X_1;H_1)\otimes C^\bullet(e^i;H_2)]^n=\left(\bigoplus_{c_{n-i}} H_1\right)\otimes H_2 = \bigoplus_{c_{n-i}} (H_1\otimes H_2). 
        \]
        Thus, 
        \begin{align*}
            \det_\tau[C^\bullet(X_1;H_1)\otimes C^\bullet(e^i;H_2)]^n&=\bigotimes_{c_{n-i}}\det_\tau H_1\otimes H_2 \\ 
            &=\bigotimes_{c_{n-i}}(\det_{\tau_1}H_1\otimes \det_{\tau_2}H_2)\\
            &=(\det_{\tau_1}H_1)^{\# c_{n-i}}\otimes (\det_{\tau_2}H_2)^{\# c_{n-i}}\\
            &=(\det_{\tau_1}C^{n-i}(X_1;H_1))\otimes (\det_{\tau_2}H_2)^{\# c_{n-i}}.
        \end{align*}
        Taking alternating product, we get 
        \begin{align}\label{eq:prod}
            \det_\tau C^\bullet(X_1;H_1)\otimes C^\bullet(e^i;H_2)&=\bigotimes_n ((\det_{\tau_1}C^{n-i}(X_1;H_1))^{(-1)^n}\otimes (\det_{\tau_2}H_2)^{(-1)^n\# c_{n-i}}) \nonumber \\
            &=(\det_{\tau_1}C^\bullet(X_1;H_1))^{(-1)^i}\otimes (\det_{\tau_2}C^\bullet(e^i;H_2))^{\chi(\pi_1\backslash K_1)}.
        \end{align}
        This proves the basis step. 

        The induction step is similar. Suppose that $X_2$ is obtained from $X_2'$ by attaching a cell $e^i$. Let $H_2'$ be the restriction of $H_2$ to $X_2'$. The restriction of $H_2$ to $e^i$ is still denoted as $H_2$. By Lemma \ref{lem:cell}, we obtain 
        \[C^\bullet(X_2;H_2)=C^\bullet(X_2';H_2')\oplus C^\bullet(e^i;H_2).\]
        We have 
        \[C^\bullet (X_1;H_1)\otimes C^\bullet(X_2;H_2)=(C^\bullet(X_1;H_1)\otimes C^\bullet (X_2';H_2'))\oplus (C^\bullet(X_1;H_1)\otimes C^\bullet(e^i;H_2)).\]
        Now use \eqref{eq:prod} and the induction hypothesis. 
        \begin{align*}
            \det_\tau C^\bullet (X_1;H_1)\otimes C^\bullet(X_2;H_2)&=\begin{aligned}[t]
                ((&\det_{\tau_1}C^\bullet(X_1;H_1))^{\chi(\pi_2\backslash X_2')}\otimes(\det_{\tau_2}C^\bullet(X_2';H_2'))^{\chi(\pi_1\backslash K_1)})\\
                & {} \oplus ((\det_{\tau_1}C^\bullet(X_1;H_1))^{(-1)^i}\otimes (\det_{\tau_2}C^\bullet(e^i;H_2))^{\chi(\pi_1\backslash K_1)})
            \end{aligned}\\
            &=(\det_{\tau_1}C^\bullet(X_1;H_1))^{\chi(\pi_2\backslash K_2)}\otimes(\det_{\tau_2}C^\bullet(X_2;H_2))^{\chi(\pi_1\backslash K_1)}.
        \end{align*}

        Note that the isomorphism of determinant lines is independent of the lifting because of unimodularity. Lastly, using Definition \ref{def:tor}, the required formula follows from the product formula for Euler characteristic 
        \[\chi(\pi_1\times \pi_2\backslash X_1\times X_2)=\chi(\pi_1\backslash X_1)\chi(\pi_2\backslash X_2).\]
    \end{proof}

    \begin{cor}
        Assume that $K_i$ be a finite CW complex with fundamental group $\pi_1(K_i)=\pi_i$, $i=1,2$. Let $H_i$ be a unimodular Hilbertian $(\pi_i,\alg{A}_i)$-bimodule with von Neumann dimension $1$. Then there is a natural isomorphism 
        \[\det_\tau C^\bullet(K_1\times K_2;H_1\times H_2)\cong(\det_{\tau_1}C^\bullet(K_1;H_1))^{\chi(K_2)}\otimes(\det_{\tau_2}C^\bullet(K_2;H_2))^{\chi(K_1)}.\]
        Under this identification, 
        \[\rho(K_1\times K_2;H_1\times H_2)=\rho(K_1;H_1)^{\chi(K_2)}\rho(K_2;H_2)^{\chi(K_1)},\]
        if $\sigma\in\det_\tau H_1\otimes H_2$ is chosen to be $\sigma_1\sigma_2$ where $\sigma_i\in\det_{\tau_i}H_i$. 
    \end{cor}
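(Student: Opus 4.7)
The plan is to deduce this corollary directly from Proposition \ref{prp:product} by passing to universal covers. Setting $X_i = \tilde{K}_i$, each $X_i$ is a finite $\pi_i$-CW complex with quotient $\pi_i\backslash X_i = K_i$, so in particular $\chi(\pi_i\backslash X_i) = \chi(K_i)$. Because $\pi_1(K_1\times K_2) = \pi_1\times\pi_2$ and $\widetilde{K_1\times K_2} = X_1\times X_2$, the definition \eqref{eq:unimod} identifies $C^\bullet(K_i;H_i)$ with $C^\bullet(X_i;H_i)$ in the generalized equivariant framework described at the end of \S2, and similarly $C^\bullet(K_1\times K_2;H_1\otimes H_2)$ matches $C^\bullet(X_1\times X_2;H_1\otimes H_2)$.

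Before invoking the proposition, I would verify that $H_1\otimes H_2$ is a unimodular Hilbertian $(\pi_1\times\pi_2,\alg{A}_1\otimes\alg{A}_2)$-bimodule of von Neumann dimension one. Unimodularity follows from Lemma \ref{lem:det-tensor}: for any $(g_1,g_2)\in\pi_1\times\pi_2$,
\[\Det_\tau(L_{g_1}\otimes L_{g_2}) = (\Det_{\tau_1}L_{g_1})^{\dim H_2}(\Det_{\tau_2}L_{g_2})^{\dim H_1} = 1,\]
and multiplicativity of the von Neumann dimension under the tensor product of bimodules gives $\dim_\tau(H_1\otimes H_2) = 1$. Lemma \ref{lem:tp} then supplies a natural orientation-preserving identification $\det_\tau(H_1\otimes H_2)\cong \det_{\tau_1}H_1\otimes\det_{\tau_2}H_2$ sending the chosen $\sigma$ to $\sigma_1\otimes\sigma_2$.

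With these ingredients in place, I would apply Proposition \ref{prp:product} to $(X_1,X_2,H_1,H_2)$, obtaining both the desired natural isomorphism of determinant lines and the torsion identity. Substituting $\chi(\pi_i\backslash X_i) = \chi(K_i)$ on the exponents and using Definition \ref{def:tor} to identify $\rho(K_i;H_i)$ with the equivariant torsion $\rho(X_i;H_i)$ yields exactly the statement of the corollary.

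I do not expect any step to present a real obstacle: the corollary is essentially a translation of Proposition \ref{prp:product} through the universal-cover correspondence $K_i\leftrightarrow(X_i,\pi_i)$. The one point that merits care is tracking that the preferred-inner-product datum $\sigma = \sigma_1\sigma_2$ on $H_1\otimes H_2$ is transported correctly by Lemma \ref{lem:tp}, so that the map $\psi$ appearing in Definition \ref{def:tor} on the left-hand side factors through the tensor-factored $\psi$'s on the right; this factorization is exactly what Lemma \ref{lem:tp} provides, together with the unimodularity check that makes Lemma \ref{lem:cell} applicable in the product setting.
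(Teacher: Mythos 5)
Your proposal is correct and is exactly the argument the paper intends (the corollary is stated without proof as an immediate specialization of Proposition \ref{prp:product} to $X_i=\tilde K_i$, using $\pi_1(K_1\times K_2)=\pi_1\times\pi_2$ and $\widetilde{K_1\times K_2}=\tilde K_1\times\tilde K_2$). Your additional checks of unimodularity and dimension one for $H_1\otimes H_2$ are the right points to verify and present no difficulty.
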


    The isomorphism in Proposition \ref{prp:product} is admissible, but this is not enough to recover the result in \cite[Theorem 3.93(4)]{lueck2002l2}. Instead, we need a stronger result. Recall that we have chosen $\sigma_i\in\det_{\tau_i}H_i$, and thus we have $\psi(\sigma_i)\in\det_{\tau_i}C^\bullet(X_i;H_i)$ where $\psi$ is defined in \eqref{eq:unimod}. On the other hand, $\sigma=\sigma_1\sigma_2$ defines a preferred inner product on $H_1\otimes H_2$ by Lemma \ref{lem:tp}, and thus we have $\psi(\sigma)\in\det_\tau C^\bullet(X_1\times X_2;H_1\otimes H_2)$. Then the determinant lines can be trivialized as in \eqref{eq:detcls}. The torsion is sent to a real number which is the classical definition. 

    \begin{lem}\label{lem:prod}
        The isomorphism in Proposition \ref{prp:product} is natural in the following sense: If $C^\bullet(X_1;H_1)$ and $C^\bullet(X_2;H_2)$ are of determinant class, then $C^\bullet(X_1;H_1)\otimes C^\bullet(X_2;H_2)$ is of determinant class; moreover, the isomorphism 
        \[\det_\tau C^\bullet(X_1\times X_2;H_1\otimes H_2)\cong(\det_{\tau_1}C^\bullet(X_1;H_1))^{\chi(\pi_2\backslash X_2)}\otimes(\det_{\tau_2}C^\bullet(X_2;H_2))^{\chi(\pi_1\backslash X_1)}.\]
        reduces to the canonical map $\mathbb{R}\to\mathbb{R}^{\chi(\pi_2\backslash X_2)}\otimes\mathbb{R}^{\chi(\pi_1\backslash X_1)}$. 
    \end{lem}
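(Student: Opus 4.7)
The proof follows the inductive structure of Proposition~\ref{prp:product}, verifying the claimed reduction at each step. For the determinant class assertion, I would invoke the Künneth formula for $L^2$-cohomology together with preservation of $\tau$-triviality under tensor products (cf.\ \cite[Theorem 3.93(4)]{lueck2002l2}) to conclude that the tensor product of two determinant-class projective complexes is again of determinant class.

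For the naturality I would induct on the number of equivariant cells of $X_2$, mirroring the two-step induction in Proposition~\ref{prp:product}. In the base case $X_2 = \pi_2 \cdot e^i$, the complex $C^\bullet(e^i; H_2)$ is concentrated in a single degree, hence tautologically of determinant class and weakly acyclic. The chain of identifications composing \eqref{eq:prod} then reduces to the asserted canonical map on $\mathbb{R}$'s, the key input being Lemma~\ref{lem:tp}, which by construction identifies the tensor inner product $\sigma = \sigma_1 \sigma_2$ with $\sigma_1 \otimes \sigma_2$ under $\det_\tau(H_1 \otimes H_2) \cong \det_{\tau_1} H_1 \otimes \det_{\tau_2} H_2$.

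For the inductive step, writing $X_2 = X_2' \cup_{\partial e^i} e^i$ yields a short exact sequence of projective cochain complexes
\[ 0 \to C^\bullet(X_2, X_2'; H_2) \to C^\bullet(X_2; H_2) \to C^\bullet(X_2'; H_2') \to 0 \]
with $C^\bullet(X_2, X_2'; H_2) \cong C^\bullet(e^i; H_2)$; this remains exact after tensoring with $C^\bullet(X_1; H_1)$. The induced isomorphism of determinant lines factors the identification of Proposition~\ref{prp:product} into the base case piece and the induction hypothesis piece, so the induction closes once one has the following strengthening of Lemma~\ref{lem:split}: for a short exact sequence $0 \to L \to M \to N \to 0$ in $\cat{H}(\alg{A})$ with $L, N$ of determinant class (not necessarily weakly acyclic), $M$ is of determinant class and the induced isomorphism $\det H^\bullet(L) \otimes \det H^\bullet(N) \cong \det H^\bullet(M)$ reduces under \eqref{eq:detcls} to the multiplication $\mathbb{R} \otimes \mathbb{R} \to \mathbb{R}$. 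This strengthening is the main technical obstacle; its proof proceeds exactly as for Lemma~\ref{lem:split}, replacing \cite[Theorem 3.14]{lueck2002l2} by its determinant-class generalization, so that the factorization $\Det d_M^* d_M = \Det d_L^* d_L \cdot \Det d_N^* d_N$ holds on the orthogonal complement of the harmonic parts, while the harmonic parts contribute separately to both trivializations and cancel in the comparison.
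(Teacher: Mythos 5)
Your overall skeleton---cite L\"uck for the determinant class assertion, then induct on the equivariant cells of $X_2$ in parallel with Proposition~\ref{prp:product}, with Lemma~\ref{lem:tp} as the key input in the base case---is the right shape, and the first part of your argument is fine (the paper cites \cite[Theorem~3.35(6b)]{lueck2002l2} rather than 3.93(4), but that is cosmetic). However, the way you close the induction contains a genuine gap. The ``strengthening of Lemma~\ref{lem:split}'' you rely on---that for a short exact sequence $0\to L\to M\to N\to 0$ with $L,N$ of determinant class but not weakly acyclic, the induced isomorphism $\det H^\bullet(L)\otimes\det H^\bullet(N)\cong\det H^\bullet(M)$ reduces under \eqref{eq:detcls} to multiplication---is false. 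When the sequence does not split as a sequence of complexes, the reduced map $\mathbb{R}\otimes\mathbb{R}\to\mathbb{R}$ is multiplication by the torsion of the long exact cohomology sequence (with the induced inner products), which is nontrivial outside the weakly acyclic case; this is exactly the correction term in \cite[Theorem~3.35(1)]{lueck2002l2}. A minimal counterexample: $M=(\mathbb{C}\xrightarrow{\lambda}\mathbb{C})$ with subcomplex $L=(0\to\mathbb{C})$ and quotient $N=(\mathbb{C}\to 0)$. Both $L$ and $N$ are of determinant class with $d_L=d_N=0$, yet the reduced map sends $1\otimes 1$ to $|\lambda|^{\pm 1}$. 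This also defeats your proposed repair: here the ``orthogonal complements of the harmonic parts'' of $L$ and $N$ are zero, so the claimed factorization $\Det d_M^*d_M=\Det d_L^*d_L\cdot\Det d_N^*d_N$ away from harmonics already fails. (A smaller slip in the same direction: your base case asserts that $C^\bullet(e^i;H_2)$, being concentrated in one degree, is weakly acyclic; it is not, since its reduced cohomology is $H_2\neq 0$.)

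The paper's proof avoids this issue entirely, and that is the substantive difference. The isomorphism of Proposition~\ref{prp:product} never passes through cohomological determinant lines during the induction: it is assembled from the cell-by-cell direct sum decompositions of Lemma~\ref{lem:cell} and the tensor decomposition of Lemma~\ref{lem:tp}, i.e., it lives at the level of the lines $\det C^n$. Consequently the verification reduces to tracking the preferred element: one checks that $\psi(\sigma)$ is sent to $\psi(\sigma_1)^{\chi(\pi_2\backslash X_2)}\otimes\psi(\sigma_2)^{\chi(\pi_1\backslash X_1)}$, which is bookkeeping with the definition of $\psi$, Lemma~\ref{lem:tp}, and \eqref{eq:prod}; the comparison with the trivializations \eqref{eq:detcls} is then made once, globally, for the three complexes in the statement, rather than once per attached cell. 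To repair your argument, replace the appeal to a strengthened Lemma~\ref{lem:split} by this element-tracking at the cochain level; no determinant-class analogue of Lemma~\ref{lem:split} is needed, and none is available.
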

    \begin{proof}
        The first part is proved in \cite[Theorem 3.35(6b)]{lueck2002l2}. To check the second part, we need to check that $\psi(\sigma)$ is mapped to $\psi(\sigma_1)^{\chi(\pi_2\backslash X_2)}\psi(\sigma_2)^{\chi(\pi_1\backslash X_1)}$ under the isomorphism. This follows from the definition of $\psi$, Lemma \ref{lem:tp}, and the inductive proof of Proposition \ref{prp:product}. 
    \end{proof}

    \section{\texorpdfstring{$L^2$}{L2}-torsion of fibrations}

    Let $p:E\to B$ be a simple fibration in the sense of L\"uck-Schick-Thielmann \cite{luck1998torsion}, which means that there is a coherent choice of simple structures on fibers. Denote $F_b=p^{-1}(b)$ and let $F$ be the homotopy type of $F_b$. Assume that $B$ is a connected finite CW complex and choose simple structures $F\to F_b$ on fibers coherently. Then $E$ is equipped with an induced simple structure. Note that if $p$ is a smooth fiber bundle, then it is a simple fibration via a smooth triangulation. A key property of a simple fibration is that over any cell $e$ of $B$, the restriction $p^{-1}(e)$ is homotopy equivalent to the trivial bundle $F\times e$. 

    \begin{thm}\label{thm:main}
        Let $p:E\to B$ be a simple fibration where $B$ is a connected finite CW complex. Fix a basepoint $b\in B$ and assume that $F_b$ has the homotopy type of a connected finite CW complex $F$. Let $i:F_b\to E$ be the inclusion. 

        If $i$ induces an injection on fundamental groups, and if $\chi(F)=0$, then 
        \[\det H^\bullet_{(2)}(E)\cong (\det H^\bullet_{(2)}(F))^{\chi(B)},\]
        and under this identification, 
        \[\rho_{(2)}(E)=\rho_{(2)}(F)^{\chi(B)}.\]
    \end{thm}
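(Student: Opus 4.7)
The plan is to induct on the number of cells of $B$, using Proposition \ref{prp:sum} to handle attaching a cell and Proposition \ref{prp:product} together with the simple fibration property to compute the contribution over each cell. Throughout, I view every subcomplex $Y \subseteq E$ as a $\pi_1(E)$-CW complex via preimage in the universal cover $\tilde E$ and use the coefficient module $l^2(\pi_1(E))$; the comparison with $\rho_{(2)}(F)$, which by definition uses $l^2(\pi_1(F))$, will be handled via the $\pi_1$-injection hypothesis.

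For the base case $B = \emptyset$, both sides of the claimed formula are trivial. For the inductive step, write $B = B' \cup_\varphi e^n$ with characteristic map $\Phi: D^n \to B$, and set $E' = p^{-1}(B')$. Pulling the fibration back along $\Phi$ and $\varphi$ yields a $\pi_1(E)$-pushout
\[
\begin{tikzcd}
\varphi^*E \arrow[r] \arrow[d] & \Phi^*E \arrow[d] \\
E' \arrow[r] & E
\end{tikzcd}
\]
in $\tilde E$, so Proposition \ref{prp:sum} gives, after admissible identification of determinant lines,
\[\rho_{(2)}(E) = \rho_{(2)}(E') \cdot \rho_{(2)}(\Phi^*E) \cdot \rho_{(2)}(\varphi^*E)^{-1}.\]
The simple fibration property provides simple equivalences $\Phi^*E \simeq F \times D^n \simeq F$ (so $\rho_{(2)}(\Phi^*E) = \rho_{(2)}(F)$) and $\varphi^*E \simeq F \times S^{n-1}$. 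Applying Proposition \ref{prp:product} to the latter --- with $H_1 = l^2(\pi_1(E))$ a unimodular bimodule of von Neumann dimension $1$ over $\pi_1(F)$, $H_2 = \mathbb{C}$, and the trivial group on $S^{n-1}$ --- gives
\[\rho_{(2)}(\varphi^*E) = \rho_{(2)}(F)^{\chi(S^{n-1})} \cdot \rho(S^{n-1}; \mathbb{C})^{\chi(F)} = \rho_{(2)}(F)^{\chi(S^{n-1})},\]
the second factor vanishing because $\chi(F) = 0$. Combining with the inductive hypothesis $\rho_{(2)}(E') = \rho_{(2)}(F)^{\chi(B')}$ and the identities $1 - \chi(S^{n-1}) = (-1)^n$ and $\chi(B) = \chi(B') + (-1)^n$ yields $\rho_{(2)}(E) = \rho_{(2)}(F)^{\chi(B)}$. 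The case $n = 0$ is subsumed by reading $\varphi^*E = \emptyset$ and $\chi(S^{-1}) = 0$.

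The main obstacle is bookkeeping the coefficient systems and determinant-line identifications through the induction. The $\pi_1$-injection hypothesis is essential here: it ensures that the trace-preserving inclusion of group von Neumann algebras $\pi_1(F) \hookrightarrow \pi_1(E)$ induces canonical isomorphisms identifying $\rho(F; l^2(\pi_1(E)))$ with $\rho_{(2)}(F) = \rho(F; l^2(\pi_1(F)))$, and that the Fuglede--Kadison determinants match under this inclusion at every stage of the induction. A secondary technical task is verifying that the admissible isomorphisms supplied by Propositions \ref{prp:sum} and \ref{prp:product} compose to give precisely the canonical identification $\det H^\bullet_{(2)}(E) \cong (\det H^\bullet_{(2)}(F))^{\chi(B)}$ asserted in the theorem statement.
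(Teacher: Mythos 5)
Your proposal is correct and follows essentially the same route as the paper: induction on the cells of $B$, applying the sum formula to the pushout obtained by attaching a cell and the product formula (with $H_2=\mathbb{C}$ over the trivial group) to $F\times D^n$ and $F\times S^{n-1}$, with $\chi(F)=0$ killing the extra factors and the $\pi_1$-injectivity used to identify $\rho(F;l^2(\pi_1(E)))$ with $\rho_{(2)}(F)$. The only difference is organizational --- the paper first proves the inductive statement for an arbitrary $\pi$-covering (Proposition \ref{prp:cover}) and then specializes to the universal cover, whereas you run the induction directly in the universal cover of $E$.
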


    The proof follows from the sum formula and the product formula above, using the same argument as in \cite{lueck2002l2}. We first prove

    \begin{prp}\label{prp:cover}
        Let $p:E\to B$ be a simple fibration as in Theorem \ref{thm:main}. Suppose that $q:\bar E\to E$ is a $\pi$-covering. Let $\overline{F_b}=q^{-1}(F_b)$. Then $\pi$ acts on both $\bar E$ and $\overline{F_b}$. Let $H$ be a unimodular $(\pi,\alg{A})$-bimodule of von Neumann dimension $1$. 

        If $\chi(F)=0$, then
        \[\det H^\bullet(\bar{E};H)\cong(\det H^\bullet(\overline{F_b});H)^{\chi(B)},\]
        and under this identification, 
        \[\rho(\bar E;H)=\rho(\bar F;H)^{\chi(B)}.\]
    \end{prp}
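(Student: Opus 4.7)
The plan is to mimic the classical L\"uck-Schick-Thielmann argument, inducting on the number of cells of $B$ while using the sum formula (Proposition \ref{prp:sum}) to decompose cell attachments and the product formula (Proposition \ref{prp:product}) to trivialize the contribution of each new cell, with the hypothesis $\chi(F)=0$ killing all stray terms that would otherwise obstruct the collapse.

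Enumerate the cells of $B$ in order of nondecreasing dimension, let $B_k$ denote the subcomplex built from the first $k$ cells, set $E_k=p^{-1}(B_k)$, and let $\bar E_k=q^{-1}(E_k)$; each inherits a $\pi$-action. The base case is $B_1$ a single $0$-cell, where the simple fibration datum gives a $\pi$-homotopy equivalence $\bar E_1\simeq\overline{F_b}$ and the claim is trivial since $\chi(B_1)=1$. For the inductive step, write $B_{k+1}=B_k\cup_f D^n$. Pulling back the pushout under $p$ and then under $q$ produces a $\pi$-pushout
\[
\begin{tikzcd}
q^{-1}(p^{-1}(S^{n-1})) \rar \dar & q^{-1}(p^{-1}(D^n)) \dar \\
\bar E_k \rar & \bar E_{k+1}
\end{tikzcd}
\]
of finite $\pi$-CW complexes (after a cellular approximation making the left vertical map a subcomplex inclusion). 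Proposition \ref{prp:sum} then yields an admissible identification of the relevant determinant lines together with the relation
\[
\rho(\bar E_{k+1};H)=\rho(\bar E_k;H)\cdot\rho(q^{-1}(p^{-1}(D^n));H)\cdot\rho(q^{-1}(p^{-1}(S^{n-1}));H)^{-1}.
\]

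To evaluate the two new factors, I invoke the simple fibration property: the coherent simple structures trivialize $p^{-1}(D^n)\simeq F\times D^n$ and $p^{-1}(S^{n-1})\simeq F\times S^{n-1}$, and since $D^n$ is contractible these equivalences lift $\pi$-equivariantly to $q^{-1}(p^{-1}(D^n))\simeq\overline{F_b}\times D^n$ and $q^{-1}(p^{-1}(S^{n-1}))\simeq\overline{F_b}\times S^{n-1}$, with $\pi$ acting only on the first factor. Applying Proposition \ref{prp:product} with $\pi_2=\{1\}$, $\alg{A}_2=\mathbb{C}$, $H_2=\mathbb{C}$ gives
\[
\rho(\overline{F_b}\times D^n;H)=\rho(\overline{F_b};H)^{\chi(D^n)}\cdot\rho(D^n;\mathbb{C})^{\chi(F)}=\rho(\overline{F_b};H),
\]
where the second factor collapses thanks to $\chi(F)=0$, and analogously $\rho(\overline{F_b}\times S^{n-1};H)=\rho(\overline{F_b};H)^{\chi(S^{n-1})}$. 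Combining with the inductive hypothesis $\rho(\bar E_k;H)=\rho(\overline{F_b};H)^{\chi(B_k)}$ and the additivity $\chi(B_{k+1})=\chi(B_k)+\chi(D^n)-\chi(S^{n-1})$ closes the induction.

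The main obstacle is bookkeeping: one must verify that the admissible isomorphisms produced by the sum formula, the product formula, and the simple-fibration trivializations over successive cells all compose into a single natural isomorphism of determinant lines at the level of the final identification $\det H^\bullet(\bar E;H)\cong(\det H^\bullet(\overline{F_b};H))^{\chi(B)}$. The word \emph{simple} in simple fibration is precisely what allows the cell-by-cell trivializations to be chosen coherently, contractibility of each cell supplies the $\pi$-equivariant lift to the covering, and Lemmas \ref{lem:sum} and \ref{lem:prod} guarantee that admissibility is preserved under these compositions.
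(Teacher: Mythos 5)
Your proposal is correct and follows essentially the same route as the paper: induction over the cells of $B$, the sum formula applied to the $\pi$-pushout attaching $\bar F\times D^n$ along $\bar F\times S^{n-1}$, the product formula with trivial second factor $(\pi_2=1,\ H_2=\mathbb{C})$, and the hypothesis $\chi(F)=0$ eliminating the $\rho(D^n;\mathbb{C})^{\chi(F)}$ and $\rho(S^{n-1};\mathbb{C})^{\chi(F)}$ terms. The only cosmetic difference is that you build $B$ up cell by cell from a $0$-cell while the paper peels off the top cell and inducts down to $B=\varnothing$; the bookkeeping via $\chi(B_{k+1})=\chi(B_k)+\chi(D^n)-\chi(S^{n-1})$ is identical to the paper's use of $\chi(D^n)-\chi(S^{n-1})=(-1)^n$.
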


    \begin{proof}
        Induction on the number of cells of $B$. Suppose that $B=B_0\cup_{S^{n-1}}D^n$ and let $E_0=p^{-1}(B_0)$. Let $\overline{E_0}$ be the pullback of $\bar E$. Let $F\to F_b$ be the fixed simple structure and $\bar{F}$ be the pullback of $\bar{E}$. Then we have a $G$-pushout diagram
        \[\begin{tikzcd}
            \bar{F}\times S^{n-1} \rar \dar & \overline{E_0} \dar \\ 
            \bar{F}\times D^n \rar & \overline{E}
        \end{tikzcd}\]
        Applying the sum formula to this, we get 
        \[\det H^\bullet(\overline{E_0};H)\otimes\det H^\bullet(\bar F\times D^n;H)\cong \det H^\bullet(\bar F\times S^{n-1};H)\otimes \det H^\bullet(\bar E;H),\]
        and 
        \[\rho(\bar{E};H)=\rho(\overline{E_0};H)\rho(\bar F\times D^n;H)\rho(\tilde{F}\times S^{n-1};H)^{-1}.\]
        In the product formula, we let $\pi_1=\pi$, $\alg{A}_1=\alg{A}$, $H_1=H$ and let $\pi_2=1$, $\alg{A}_2=1$, $H_2=\mathbb{C}$. Note that $\chi(\pi\backslash\bar F)=\chi(F)$. Thus, 
        \[\det H^\bullet(\bar F\times D^n;H)\cong (\det H^\bullet(\bar F;H))^{\chi(D^n)}\otimes(\det H^\bullet(D^n;\mathbb{C}))^{\chi(F)},\]
        \[\rho(\bar F\times D^n;H)=\rho(\bar F;H)^{\chi(D^n)}\rho(D^n;\mathbb{C})^{\chi(F)};\]
        \[\det H^\bullet(\bar F\times S^{n-1};H)\cong (\det H^\bullet(\bar F;H))^{\chi(S^{n-1})}\otimes(\det H^\bullet(S^{n-1};\mathbb{C}))^{\chi(F)},\]
        \[\rho(\bar F\times S^{n-1};H)=\rho(\bar F;H)^{\chi(S^{n-1})}\rho(S^{n-1};\mathbb{C})^{\chi(F)}.\]

        Now use the assumption that $\chi(F)=0$. The formulas above imply that 
        \[\det H^\bullet(\overline{E_0};H)\cong\det H^\bullet(\bar E;H)\otimes (\det H^\bullet(\bar F;H))^{(-1)^n},\]
        \[\rho(\overline{E_0};H)=\rho(\bar E;H)\rho(\bar F;H)^{(-1)^n},\]
        since $\chi(D^n)-\chi(S^{n-1})=(-1)^n$. 

        The proposition is true when $B=\varnothing$, so by induction we obtain the desired result. 
    \end{proof}

    Next we indicate how to recover the result in \cite[Lemma 3.104]{lueck2002l2}. 

    \begin{lem}
        The isomorphism in Proposition \ref{prp:cover} is admissible, i.e., if $\overline{F_b}$ is weakly acyclic and of determinant class, then so is $\overline{E}$; moreover, the isomorphism 
        \[\det H^\bullet_{(2)}(E)\to (\det H^\bullet_{(2)}(F))^{\chi(B)},\]
        reduces to the canonical map $\mathbb{R}\to\mathbb{R}^{\chi(B)}$. 
    \end{lem}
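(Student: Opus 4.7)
The plan is to re-run the inductive proof of Proposition \ref{prp:cover}, upgrading each invocation of the sum and product formulas to the corresponding admissibility statement (Lemmas \ref{lem:sum} and \ref{lem:prod}). I would induct on the number of cells of $B$, taking as inductive hypothesis that $\overline{E_0}$ is weakly acyclic and of determinant class and that the isomorphism $\det H^\bullet(\overline{E_0};H)\cong(\det H^\bullet(\bar F;H))^{\chi(B_0)}$ produced by Proposition \ref{prp:cover} is admissible. The base case $B=\varnothing$ is vacuous.

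For the inductive step, write $B=B_0\cup_{S^{n-1}}D^n$ as in the proof of Proposition \ref{prp:cover}. First I would handle the two product factors. Since $\bar F$ is weakly acyclic and of determinant class by assumption, and since $D^n$ and $S^{n-1}$ with trivial $\mathbb{C}$-coefficients are weakly acyclic and of determinant class, Lemma \ref{lem:prod} shows that $C^\bullet(\bar F\times D^n;H)$ and $C^\bullet(\bar F\times S^{n-1};H)$ are of determinant class and that the product-formula isomorphisms are admissible. Weak acyclicity of these products is immediate from the Künneth decomposition $H^\bullet(\bar F\times X;H)\cong H^\bullet(\bar F;H)\otimes H^\bullet(X;\mathbb{C})$, which vanishes once $H^\bullet(\bar F;H)=0$. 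Using $\chi(F)=0$, the product-formula isomorphisms collapse into admissible isomorphisms involving only $\det H^\bullet(\bar F;H)$, exactly matching the arithmetic in the proof of Proposition \ref{prp:cover}.

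Next I would apply the sum formula to the $G$-pushout square. Three of its four corners, namely $\bar F\times S^{n-1}$, $\bar F\times D^n$, and $\overline{E_0}$, are now known to be weakly acyclic and of determinant class, so Lemma \ref{lem:sum} gives that $\overline{E}$ is also weakly acyclic and of determinant class and that the sum-formula isomorphism is admissible. Composing these admissible maps with the admissible map supplied by the induction hypothesis, and using $\chi(B_0)+(-1)^n=\chi(B)$, yields the admissible isomorphism $\det H^\bullet(\bar E;H)\cong(\det H^\bullet(\bar F;H))^{\chi(B)}$. Specializing to $H=l^2(\pi)$ recovers the form stated in the lemma.

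The main obstacle is essentially bookkeeping: that the composition of admissible isomorphisms is admissible is immediate from the definition, since each factor reduces by hypothesis to the same canonical arithmetic identity over $\mathbb{R}$. No new analytic input is needed beyond Lemmas \ref{lem:sum} and \ref{lem:prod}, and no nontrivial use is made of the specific coefficient module beyond $\chi(F)=0$.
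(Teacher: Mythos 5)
Your proposal is correct and follows essentially the same route as the paper: re-running the induction in Proposition \ref{prp:cover} and upgrading each use of the sum and product formulas via Lemmas \ref{lem:sum} and \ref{lem:prod}. One small slip: $D^n$ and $S^{n-1}$ with $\mathbb{C}$-coefficients are \emph{not} weakly acyclic (e.g.\ $H^0\neq 0$), but this is harmless since Lemma \ref{lem:prod} only requires the determinant class condition, and you correctly derive weak acyclicity of the products from that of $\bar F$ via the K\"unneth decomposition.
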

    \begin{proof}
        The first part is proved in \cite[Lemma 3.104]{lueck2002l2}. Note that the isomorphisms in the proof of Proposition \ref{prp:cover} come from the sum formula and the product formula, which are admissible, c.f., Lemmas \ref{lem:sum} and \ref{lem:prod}. Since in each application of the sum formula, the acyclic assumption is satisfied, and in each application of the product formula, we only need the determinant class condition, we conclude that the isomorphism $\det H^\bullet_{(2)}(E)\to (\det H^\bullet_{(2)}(F))^{\chi(B)}$ is also admissible. 
    \end{proof}

    Lastly, we prove the main theorem. 
        
    \begin{proof}[Proof of Theorem \ref{thm:main}.]
        Apply Proposition \ref{prp:cover} to the case when $q$ is the universal cover, and $H=l^2(\pi_1(E))$. We have $H^\bullet_{(2)}(E)\cong H^\bullet(\bar{E};H)$. The injectivity condition on $i$ implies that $H^\bullet_{(2)}(F)\cong H^\bullet(\bar F;H)$. The Theorem \ref{thm:main} follows easily. 
    \end{proof}

    \section{Open problems}

    The condition $\chi(F)=0$ in Theorem \ref{thm:main} is necessary to obtain the desired formula. Moreover, the injectivity of $i$ in Theorem \ref{thm:main} also seems necessary. In \cite{luck1998torsion,dai2012adiabatic}, a general formula for computing the analytic torsion of the fibration was proved. There is a term coming from the Leray-Serre spectral sequence. We expect that a similar formula holds if we use the equivariant Leray-Serre spectral sequence \cite{moerdijk1993equivariant}. 

    One important aspect of $L^2$-invariants is that they can be approximated by classical invariants. It is interesting to know if the $L^2$-torsion defined in the extended category can also be approximated by the classical counterpart. 
    
    The proofs given in this paper are purely combinatorial. It seems possible to generalize the argument of \cite{dai2012adiabatic} to give an analytic argument. For example, the analytic definition of $L^2$-torsion in the extended abelian category is discussed in \cite{braverman2005l2,zhang2005extended}.
    
    \bibliographystyle{plain}
    \bibliography{fibration}
\end{document}